\newtheorem{thm}{Theorem}
\newtheorem{lem}[thm]{Lemma}
\newtheorem{Definition}[thm]{Definition}
\newtheorem{proposition}[thm]{Proposition}
\newtheorem{Corollary}[thm]{Corollary}
\newdefinition{rmk}{Remark}
\newproof{pf}{Proof}
\newproof{pot}{Proof of Theorem \ref{thm2}}
\newcommand{\R}{\mathbb{R}}
\newcommand{\N}{\mathbb{N}}
\DeclareMathOperator{\supp}{supp}
\DeclareMathOperator{\rank}{rank}
\DeclareMathOperator{\id}{id}
\DeclareMathOperator{\Id}{Id}
\begin{document}

\cortext[cor1]{Corresponding author}

\title{$s$-Numbers of compact embeddings of function spaces on quasi-bounded domains}
\author{Shun Zhang}\author{Alicja G\k{a}siorowska\corref{cor1}}

\begin{abstract}
We prove asymptotic formulas for the behavior of approximation, Gelfand, Kolmogorov and Weyl numbers of Sobolev embeddings between Besov and Triebel-Lizorkin spaces defined on quasi-bounded domains.
\end{abstract}
\begin{keyword}
approximation numbers \sep Gelfand numbers \sep Kolmogorov numbers \sep Weyl numbers \sep Sobolev embedding \sep quasi-bounded domain
\end{keyword}

\maketitle

\section{Introduction}
Today we have a good knowledge about asymptotic behavior of entropy and approximation numbers of compact embeddings between spaces of Sobolev-Besov-Hardy type. The results regarding  asymptotic behavior of the entropy numbers and the approximation numbers of the embeddings of spaces defined on bounded domains are the oldest part of this theory. We owe  M.S.~Birman,   M.Z.~Solomjak \cite{BS}, D.~Edmunds, H.~Triebel \cite{ET}, A.~Caetano \cite{Cae} and others  the estimates in this respect. The Gelfand, the Kolmogorov and the Weyl numbers on bounded domains were studied by  C.~Lubitz \cite{Lub}, R.~Linde \cite{RL}, A.~Caetano \cite{Cae_w1,Cae_w2} and J.~Vyb\'iral \cite{JV}. But, there is a wider class of domains for which the embeddings of spaces of Besov-Sobolev type can be compact. These are so called quasi-bounded domains. The quasi-bounded domain may  be unbounded, and can be even a domain of infinite Lebesgue measure. Recently H.-G.~Leopold and L.~Skrzypczak \cite{LS} gave necessary and sufficient conditions for compactness of these embeddings and studied the asymptotic behavior of entropy numbers for function spaces defined on quasi-bounded domains. In this article we investigate asymptotic behavior of  $s$-numbers of these embeddings. Our approach is based on wavelet decomposition of function spaces on domains introduced by H.~Triebel in 2008; cf. \cite{HT}. In the case of the Weyl numbers our estimates complement the earlier results even for bounded domains.

The concept of quasi-bounded domains is not new. Function spaces on these domains were studied in the 60s of the last century by  Clark and Hewgill; see \cite{Cla,CH}. Overview of some of these results can be found in  \cite{AF}. All these results assume a certain regularity of the boundary of the domain. The assumptions that we make here on the regularity of the boundary (uniformly $ E $-porous domain) are weaker than the previously assumed.

\section{Function spaces on quasi-bounded domain}
We assume that the reader is acquainted with the definition and basic properties of Besov spaces $B_{p,q}^s(\R^d)$, $0<	p,q\leq \infty$, $s\in\R$ and Triebel-Lizorkin spaces $F_{p,q}^s(\R^d)$, $0< p<\infty$, $0<q\leq\infty$, $s\in\R$.
We denote the continuous embeddings between quasi-Banach spaces by $\hookrightarrow$ and also we  use the common notations $A_{p,q}^s(\R^d)$, $A_{p,q}^s(\Omega)$ with $A=B$ or $A=F$, if it makes no difference.

Let $\Omega$ be an open  and nonempty set in $\R^d$. Such set is called a domain.
 The domain $\Omega$ in $\R^d$ is called quasi-bounded if
 \begin{equation}
\lim_{x\in\Omega, |x|\rightarrow\infty} d(x,\partial\Omega)=0.
\label{quasi}
\end{equation}
 In particular, each bounded domain is quasi-bounded.

\begin{Definition}
Let $\Omega$ be the domain in $\R^d$, such that $\Omega\neq\R^d$ and let $0<p,q\leq \infty$, $s\in\R$ with  $p<\infty$ for $F$-spaces.
\begin{enumerate}[(i)]
\item Let
$$A_{p,q}^s(\Omega)=\{f\in D'(\Omega): f=g|_\Omega\ \text{for some distribution}\ g\in A_{p,q}^s(\R^d)\},$$
$$||f| A_{p,q}^s(\Omega)||=\inf||g|A_{p,q}^s(\R^d)||,$$
where the infimum is taken over all $g\in A_{p,q}^s(\R^d)$ with $f=g|_\Omega$.
	\item Let
	$$\widetilde{A}_{p,q}^s(\bar{\Omega})=\{f\in A_{p,q}^s(\R^d): \supp f\subset \bar{\Omega}\},$$
	$$\widetilde{A}_{p,q}^s(\Omega)=\{f\in D'(\Omega): f=g|_\Omega\ \text{for some distribution}\ g\in \widetilde{A}_{p,q}^s(\bar{\Omega})\},$$
	$$\|f|\widetilde{A}_{p,q}^s(\Omega)\|=\inf\|g|A_{p,q}^s(\R^d)\|,$$
	where the infimum is taken over all $g\in\widetilde{A}_{p,q}^s(\bar{\Omega})$ with $f=g|_\Omega $.
	\item We define
	$$
	\bar{B}_{p,q}^s(\Omega)=\begin{cases}
\widetilde{B}_{p,q}^s(\Omega), &\text{if} \quad 0<p\leq\infty, 0< q\leq\infty, s>\sigma_{p},\\
B_{p,q}^0(\Omega), &\text{if}\quad 1<p<\infty, 1\leq q\leq\infty, s=0,\\
 B_{p,q}^s(\Omega), &\text{if} \quad 0< p<\infty, 0< q\leq\infty, s<0,
\end{cases}
$$
and
$$
\bar{F}_{p,q}^s(\Omega)=\begin{cases}
\widetilde{F}_{p,q}^s(\Omega), &\text{if} \quad 0< p<\infty, 0< q\leq\infty, s>\sigma_{p,q},\\
F_{p,q}^0(\Omega), &\text{if}\quad 1<p<\infty, 1\leq q\leq\infty, s=0,\\
 F_{p,q}^s(\Omega), &\text{if} \quad 0< p<\infty, 0< q\leq\infty, s<0,
\end{cases}	
$$
where $$\sigma_p=d\bigg(\frac{1}{p}-1\bigg)_+,\quad \sigma_{p,q}=d\bigg(\frac{1}{\min(p,q)}-1\bigg)_+, \ 0< p,q\leq\infty.$$
\end{enumerate}
\label{aomega}
\end{Definition}
In this paper we consider only the so called  uniformly $E$-porous domain introduced by H. Triebel; cf. \cite{HT}. This assumption of the uniformly $E$-porous property allows us to use the wavelet characterization of these function spaces.
\begin{Definition}
\begin{enumerate}[(i)]
	\item A close set $\Gamma\subset\R^d$ is said to be porous if there exists a number  $0<\eta<1$ such that one finds for any ball $B(x,r)\subset\R^d$ centered at $x\in\Gamma$ and of radius $r$ with $0<r<1$, a ball $B(y,\eta r)$ with
$$B(y,\eta r)\subset B(x,r)\quad \text{and}\quad B(y,\eta r)\cap\Gamma=\emptyset.$$
\item
A close set $\Gamma\subset\R^d$ is said to be uniformly porous if it is porous and there is a locally finite positive Radon measure $\mu$ on $\R^d$ such that $\Gamma =\supp \mu$ and for some constants $C,c>0$
$$C\ h(r)\leq\mu(B(x,r))\leq c\ h(r), \quad \text{for}\quad x\in\Gamma, 0<r<1,$$
where $h:[0,1]\rightarrow\R$ is a continuous strictly increasing function with  $h(0)=0$ and $h(1)=1$ (the constants $C,c$ are independent of $x$ and $r$).
\end{enumerate}
\label{por}
\end{Definition}

\begin{Definition}
Let $\Omega$ be an open set in $\R^d$ such that $\Omega\neq\R^d$ and $\Gamma=\partial\Omega$.
\begin{enumerate}[(i)]
	\item The domain $\Omega$ is said to be  $E$-porous if there is a number $0<\eta<1$ such that one finds for any ball $B(x,r)\subset\R^d$ centered at $x\in\Gamma$ and of radius $r$ with $0<r<1$, a ball  $B(y,\eta r)$ with
$$B(y,\eta r)\subset B(x,r)\quad \text{and}\quad B(y,\eta r)\cap\bar{\Omega}=\emptyset.$$
\item The domain $\Omega$ is called uniformly $E$-porous  if it is $E$-porous and $\Gamma=\partial\Omega$ is uniformly porous.
\end{enumerate}
\label{Epor}
\end{Definition}

In 2008 H. Triebel presented wavelet characterization of function spaces on E-porous domains; cf. \cite[Chapter 2 and 3]{HT}. By Theorem 3.23 in \cite{HT} there exists an isomorphism between spaces $\bar{B}_{p,q}^s(\Omega)$ and $\ell_q\big(2^{j(s-{d}/{p})}\ell_p^{M_j}\big)$, where the last space is defined below.

\begin{Definition}
Let $0<p\leq \infty, 0<q\leq\infty$, $\{\beta_j\}_{j=0}^\infty$ be a sequence of positive numbers and $\{M_j\}_{j=0}^\infty\in\bar{\N}$, $j\in\N_0$, $\bar{\N}=\N\cup\{\infty\}$.  Then
$$
 \ell_q(\beta_j\ell_p^{M_j})=\bigg\{x\ :\ x=\{x_{j,l}\}_{j\in\N_0, l=1,\ldots,M_j }\quad \mathrm{with}$$
$$||x|\ell_q(\beta_j\ell_p^{M_j})||=\bigg(\sum_{j=0}^\infty \beta_j^q\bigg(\sum_{l=1}^{M_j}|x_{j,l}|^p\bigg)^{q/p}\bigg)^{1/q}<\infty\bigg\}
$$
with the obvious modifications if $p=\infty$ or $q=\infty$.
\label{ciagi2}
\end{Definition}

In \cite{LS} H.-G. Leopold and L. Skrzypczak  introduced also the so called box packing constant of a domain in $\R^d$.

Let $\Omega\subset\R^d$ be the domain such that  $\Omega\neq\R^d$. We put
\begin{multline*}
b_j(\Omega)=\sup\bigg\{k:\bigcup_{\ell=1}^kQ_{j,m_\ell}\subset\Omega, \\
 Q_{j,m_\ell}\ \text{being pairwise disjoint dyadic cubes of size}\ 2^{-j}\bigg\},
 \end{multline*}
for $j=0,1,\ldots$ and we put
$$b(\Omega)=\sup\bigg\{t\in\R_+:\limsup_{j\rightarrow\infty}b_j(\Omega)2^{-jt}=\infty\bigg\}.$$
 Moreover it is proved in \cite{LS}, that for any nonempty open set $\Omega\subset\R^d$ we have $d\leq b(\Omega)\leq\infty$. If $\Omega$ is unbounded and is not quasi-bounded, then $b(\Omega)=\infty$. If  Lebesgue measure $|\Omega|$ is finite, then $b(\Omega)=d$. In particular if the domain is bounded, then $b(\Omega)=d$. If the domain is quasi-bounded and its  Lebesgue measure is infinite then $b(\Omega)$ can be finite and bigger than $d$. However there are quasi-bounded domains such that $b(\Omega)=\infty$.

Furthermore in \cite{LS} it is shown that
\begin{equation}
M_j\sim b_j(\Omega).
\label{l_kostek}
\end{equation}

 The  box packing constant helps to prove the conditions for the compactness of Sobolev embeddings.
 \begin{thm}
 Let $\Omega$ be an uniformly $E$-porous quasi-bounded domain in $\R^d$ and let $\frac{1}{p^*}=\bigg(\frac{1}{p_2}-\frac{1}{p_1}\bigg)_+$, $b(\Omega)<\infty$. Then
 \begin{equation}
 \label{eq:1}
 \bar{B}_{p_1,q_1}^{s_1}(\Omega) \hookrightarrow\bar{B}_{p_2,q_2}^{s_2}(\Omega)
 \end{equation}
 is compact if
 $$s_1-s_2-d\bigg(\frac{1}{p_1}-\frac{1}{p_2}\bigg)>\frac{b(\Omega)}{p^*}.$$
 If the embedding (\ref{eq:1}) is compact and $\frac{1}{p^*}=0$ then $s_1-s_2-d\bigg(\frac{1}{p_1}-\frac{1}{p_2}\bigg)>0$.\\
 If the embedding (\ref{eq:1}) is compact and $\frac{1}{p^*}>0$ then $s_1-s_2-d\bigg(\frac{1}{p_1}-\frac{1}{p_2}\bigg)\geq \frac{b(\Omega)}{p^*}$.
 \label{twr:1.4}
 \end{thm}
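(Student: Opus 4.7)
The plan is to reduce the embedding (\ref{eq:1}) to a block-diagonal identity operator on sequence spaces via the wavelet isomorphism $\bar{B}^{s}_{p,q}(\Omega)\cong \ell_q(2^{j(s-d/p)}\ell_p^{M_j})$ from Theorem 3.23 of \cite{HT}, and then analyze a single scalar weight. Writing $\alpha_i = s_i - d/p_i$ and $\delta = s_1 - s_2 - d(1/p_1 - 1/p_2)$, the embedding becomes
\[
\mathrm{id}\colon \ell_{q_1}(2^{j\alpha_1}\ell_{p_1}^{M_j}) \longrightarrow \ell_{q_2}(2^{j\alpha_2}\ell_{p_2}^{M_j}).
\]
A direct H\"older computation gives $\|\mathrm{id}\colon \ell_{p_1}^{M} \to \ell_{p_2}^{M}\|\sim M^{1/p^*}$, so the $j$th block has norm $n_j\sim w_j$ with
\[
w_j := 2^{-j\delta}M_j^{1/p^*} \sim 2^{-j\delta}b_j(\Omega)^{1/p^*},
\]
using (\ref{l_kostek}). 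Standard facts on block-diagonal operators between weighted $\ell_q$-sums then reduce compactness to the scalar conditions ``$w\in c_0$'' when $q_1 \leq q_2$ and ``$w\in \ell_r$, $1/r = 1/q_2 - 1/q_1$'' when $q_1 > q_2$.

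For sufficiency, assume $\delta > b(\Omega)/p^*$ and (using $b(\Omega) < \infty$) pick $t$ with $b(\Omega) < t < p^*\delta$. By the very definition of $b(\Omega)$, $\limsup_j b_j(\Omega)2^{-jt} < \infty$, hence $w_j \leq C\,2^{-j(\delta - t/p^*)}$ decays exponentially. Both scalar conditions above are then trivially satisfied, so truncating to the first $N$ levels yields finite-rank operators converging to the embedding in operator norm, and compactness follows.

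For necessity I would use simple normalized test vectors concentrated at a single level $j$. If $1/p^* = 0$, let $e^{(j)}$ have a single entry $2^{-j\alpha_1}$ at position $(j,1)$; then $\|e^{(j)}\|_{\mathrm{in}} = 1$ and $\|e^{(j)}\|_{\mathrm{out}} = 2^{-j\delta}$. Since the $e^{(j)}$ are disjointly supported, $\delta \leq 0$ produces a uniformly separated family in the image, contradicting compactness. If $1/p^* > 0$, take the equidistributed block $x^{(j)}_{j,l} = 2^{-j\alpha_1}M_j^{-1/p_1}$ for $l = 1,\ldots,M_j$; a direct computation gives $\|x^{(j)}\|_{\mathrm{in}} = 1$ and $\|x^{(j)}\|_{\mathrm{out}} \sim 2^{-j\delta}M_j^{1/p^*}$. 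Assuming $\delta < b(\Omega)/p^*$, pick $t$ with $p^*\delta < t < b(\Omega)$; by definition there is a subsequence $j_k \to \infty$ along which $b_{j_k}(\Omega)2^{-j_k t} \to \infty$, and then $\|x^{(j_k)}\|_{\mathrm{out}} \to \infty$, which rules out even boundedness.

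I expect the main obstacle to be organisational rather than conceptual: keeping track of all combinations of $(p_1,p_2)$ and $(q_1,q_2)$, and transmitting the (non-)compactness criteria rigorously between the sequence side and the function-space side via the wavelet isomorphism. The asymmetric strict/non-strict dichotomy between the two parts of the necessary statement reflects the fact that $b(\Omega)$ is defined through a $\limsup$, which only produces subsequential blow-up of $w_j$; this breaks boundedness strictly below the critical exponent but does not by itself decide the borderline $\delta = b(\Omega)/p^*$.
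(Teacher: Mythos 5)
The paper itself contains no proof of Theorem \ref{twr:1.4}: the statement is quoted from Leopold and Skrzypczak \cite{LS}, and only the surrounding machinery (Triebel's wavelet isomorphism from \cite{HT}, the equivalence $M_j\sim b_j(\Omega)$ in (\ref{l_kostek}), and the sequence-space compactness criterion of \cite{HL} with Lemma \ref{lem:1}) is reproduced. Your argument is precisely the route this machinery points to, and I find no gap in it: the reduction to $\id:\ell_{q_1}(2^{j\delta}\ell_{p_1}^{M_j})\rightarrow\ell_{q_2}(\ell_{p_2}^{M_j})$, the sufficiency via a choice of $t$ strictly between $b(\Omega)$ and $p^*\delta$ (giving exponential decay of $w_j=2^{-j\delta}M_j^{1/p^*}$ and hence operator-norm convergence of the truncations, which is exactly Lemma \ref{lem:1}), and the necessity via disjointly supported normalized single-block and equidistributed-block test vectors all work, including the correct use of the $\limsup$ definition of $b(\Omega)$ on both sides of the critical exponent. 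Three small points you should make explicit in a final write-up: when $1/p^*=0$ the interval $(b(\Omega),p^*\delta)$ degenerates, so either read it as an arbitrary $t>b(\Omega)$ or simply note $w_j=2^{-j\delta}$ and argue directly from $\delta>0$; quasi-boundedness guarantees $b_j(\Omega)<\infty$, hence $M_j<\infty$, so the level-$\le N$ truncations really are finite rank; and the transfer of boundedness and (non)compactness between the function-space and sequence-space formulations rests on Theorem 3.23 of \cite{HT} providing one and the same wavelet isomorphism for source and target, which is where the uniform $E$-porosity hypothesis enters. With these remarks added, your proof is complete and matches the intended argument of \cite{LS}; the asymmetry between the strict inequality when $1/p^*=0$ and the non-strict one when $1/p^*>0$ is correctly explained by the subsequential nature of the $\limsup$.
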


Now we will formulate the key definitions, where we denote by $L(X,Y)$ the class of all linear continuous operators from $X$ into $Y$.

\begin{Definition}
Let $X$ and $Y$ be quasi-Banach spaces and $T~\in~L(X,Y)$.
\begin{enumerate}[(i)]
	\item For $k\in \N$, we define the \textit{$k$-th approximation number} by
	\begin{displaymath}
	a_k(T):=\inf\{\|T-A\|\ :\ A\in L(X,Y),\ \rank(A)<k\},
	\end{displaymath}
	where $\rank(A)$ denotes the dimension of the range $A(X)=\{A(x), x\in X\}$.
	\item For $k\in \N$, we define the \textit{$k$-th Gelfand number} by
	\begin{displaymath}
	c_k(T):=\inf\{\|TJ_M^X\|: M\subset X,\ \mathrm{codim}(M)<k\},
	\end{displaymath}
	where $J_M^X$ is the natural injection of $M$ into  $X$ and  $M$ is a closed subspace of the quasi-Banach space $X$.
	\item For $k\in \N$, we define the \textit{$k$-th Kolmogorov number} by
	\begin{displaymath}
	d_k(T):=\inf\{\|Q_N^YT\|: N\subset Y,\ \mathrm{dim}(N)<k\},
	\end{displaymath}
	where $Q_N^Y$ is the natural surjection of $Y$ onto  the quotient space
	
$$Y/N=\{y+N:\|y+N\|=\inf_{z\in N}\|y+z\|\}$$
and $N$  is a closed subspace of the quasi-Banach space $Y$.
\item For $k\in \N$, we define the \textit{$k$-th Weyl number} by
	$$
	x_k(T):=\sup\{a_k(TS): S\in L(\ell_2, X)\ \text{with}\ \|S\|\leq 1\}.
	$$
\end{enumerate}
\end{Definition}

The approximation numbers, the Gelfand, the Kolmogorov and the Weyl numbers  in the context of Banach spaces are examples of $s$-numbers, that were introduced by A. Pietsch; cf. \cite{Pi}. In \cite{Cae_w2} we can find  definition of $s$-numbers extended to the context of quasi-Banach spaces.

Now we recall the well known properties of the above $s$-numbers.
\begin{proposition}
 Let $s_k \in\{a_k,c_k, d_k, x_k\}$, $W, X, Y$ be  quasi-Banach spaces and let $Z$ be a $t$-Banach space,  $t\in(0,1]$. Then

\begin{enumerate}[1.]

\item $ \|T\|=s_1(T)\geq s_2(T)\geq s_3(T)\geq\ldots\geq 0$\quad
 for all $T~\in~L(X,Y)$,
	\item {\normalfont{(additivity)}} $$s_{n+k-1}^t (T_1 +T_2)\leq s_k^t(T_1) + s_n^t(T_2)\quad  $$
	  for all  $T_1,T_2~\in~L(X,Z)$ and $n,k\in\N$,
	\item {\normalfont(multiplicativity)} $$s_{n+k-1} (T_1T_2)\leq s_k(T_1)s_n(T_2)\quad $$
	 for all $T_1\in L(X,Y)$, $T_2\in L(W,X)$ and $n,k\in\N$,

	\item  $x_k(T) \leq c_k(T)\leq a_k(T),\quad d_k(T)\leq a_k(T)$\quad
	 for all $T~\in~L(X,Y)$ and $k\in\N$.

\end{enumerate}

\label{prop}

\end{proposition}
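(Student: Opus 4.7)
The plan is to verify the four items by unwinding the four definitions in parallel, using only the algebraic structure of finite-rank operators, finite-codimensional subspaces, and quotients by finite-dimensional subspaces, together with the $t$-triangle inequality $\|x+y\|^t \le \|x\|^t + \|y\|^t$ available in the $t$-Banach space $Z$. The four $s$-number families share a common skeleton, so I would treat them side by side and handle $x_k$ last by reducing to the already established properties of $a_k$ after composition with an arbitrary $S\in L(\ell_2,X)$ with $\|S\|\le 1$.

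Item 1 is essentially bookkeeping: monotonicity of $s_k$ in $k$ is immediate because enlarging $k$ enlarges the family over which the infimum is taken, and choosing $A=0$, $M=X$, $N=\{0\}$ respectively yields $a_1(T)=c_1(T)=d_1(T)=\|T\|$, after which $x_1(T)=\sup_{\|S\|\le 1}a_1(TS)=\|T\|$. For item 2 I would begin with $a_k$: pick $A_i$ almost-optimal for $a_k(T_1)$ and $a_n(T_2)$; since $\rank(A_1+A_2)\le\rank A_1+\rank A_2 < n+k-1$, the operator $A_1+A_2$ is admissible for $a_{n+k-1}(T_1+T_2)$, and the $t$-triangle inequality in $Z$ gives the bound after letting the slack tend to zero. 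For $c_k$ I would use $M=M_1\cap M_2$ with $\mathrm{codim}(M)<n+k-1$; for $d_k$, the subspace $N=N_1+N_2$ of dimension $<n+k-1$ together with the canonical factorization $Y\to Y/N\to Y/N_i$; and for $x_k$, I would apply $a_k$-additivity to $(T_1+T_2)S$ and then take the supremum over $S$.

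Item 3 rests on the algebraic identity
\begin{equation*}
T_1T_2 - (A_1T_2+T_1A_2-A_1A_2) = (T_1-A_1)(T_2-A_2),
\end{equation*}
so $B:=A_1T_2+T_1A_2-A_1A_2$ has rank $<n+k-1$ whenever $\rank A_1<k$ and $\rank A_2<n$, yielding multiplicativity for $a_k$; for $c_k$ combine a Gelfand-admissible $M_2\subset X$ for $T_1$ with $M_1=T_2^{-1}(M_2)\subset W$, for $d_k$ do the analogous construction in $Y$ using quotient maps, and for $x_k$ invoke the $a_k$ case after composition with $S$. In item 4, $c_k\le a_k$ follows by taking $M:=\ker A$, for which $\mathrm{codim}(M)\le\rank A<k$ and $\|TJ_M^X\|=\|(T-A)J_M^X\|\le\|T-A\|$; symmetrically $d_k\le a_k$ comes from $N:=A(X)$, since then $Q_N^Y T = Q_N^Y(T-A)$.

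The inequality $x_k\le c_k$ is the only step that is not purely formal: it reduces to the classical Hilbert-space identity $a_k(R)=c_k(R)$ for $R\in L(\ell_2,Y)$, obtained by using the orthogonal projection onto a codimension-less-than-$k$ subspace of $\ell_2$ to lift a restricted operator to a global finite-rank approximant. Once this is in hand, $a_k(TS)=c_k(TS)\le c_k(T)\|S\|\le c_k(T)$ for every admissible $S$, and the supremum over $S$ finishes the argument. This Hilbert-space identity, together with the $t$-triangle bookkeeping in item 2, is where I would focus the most care; the remaining manipulations are straightforward rearrangements of the defining infima.
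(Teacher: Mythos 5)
The paper itself offers no proof of this proposition -- it is recalled as a collection of well-known properties with references to Pietsch and to Caetano for the quasi-Banach setting -- so your outline has to stand on its own, and most of it does. Items 1, 2 and 4 are the standard arguments and are correct as sketched: the $t$-triangle bookkeeping with $A_1+A_2$, $M_1\cap M_2$, $N_1+N_2$; the choices $M=\ker A$ and $N=A(X)$ for $c_k\le a_k$ and $d_k\le a_k$; and the orthogonal-projection argument showing $a_k(R)=c_k(R)$ for $R\in L(\ell_2,Y)$, which is exactly the right route to $x_k\le c_k$ since $x_k(T)=\sup_S a_k(TS)=\sup_S c_k(TS)\le c_k(T)$. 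Two small points should be tightened: in the Kolmogorov additivity the canonical factorization goes $Z/N_i\to Z/N$ (not $Z/N\to Z/N_i$), although the inequality you actually need, $\inf_{z\in N}\|y+z\|\le\inf_{z\in N_i}\|y+z\|$ for $N_i\subset N$, is immediate; and in the Gelfand multiplicativity you must take $M=M_W\cap T_2^{-1}(M_X)$ with $M_W$ Gelfand-admissible for $T_2$ in $W$ and $M_X$ admissible for $T_1$ in $X$ -- the preimage alone only yields $c_k(T_1T_2)\le c_k(T_1)\|T_2\|$.

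The genuine gap is item 3 for the Weyl numbers. ``Invoking the $a_k$ case after composition with $S$'' means estimating $a_{n+k-1}\big(T_1(T_2S)\big)\le a_k(T_1)\,a_n(T_2S)$, and taking the supremum over $\|S\|\le 1$ then gives only $x_{n+k-1}(T_1T_2)\le a_k(T_1)\,x_n(T_2)$; the first factor is $a_k(T_1)$, not $x_k(T_1)$, and since in general $x_k(T_1)$ can be much smaller than $a_k(T_1)$ this is strictly weaker than the claimed inequality -- which is moreover the form actually used later in the paper, e.g.\ in Lemmas \ref{xn4} and \ref{xn5}, where two Weyl numbers are multiplied. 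The missing idea is a renormalization producing a new admissible operator on $\ell_2$: given $S$ with $\|S\|\le1$ and $\varepsilon>0$, choose $B\in L(\ell_2,X)$ with $\rank(B)<n$ and $\|T_2S-B\|\le a_n(T_2S)+\varepsilon\le x_n(T_2)+\varepsilon$; since $T_1T_2S=T_1(T_2S-B)+T_1B$ and $\rank(T_1B)<n$, one gets $a_{n+k-1}(T_1T_2S)\le a_k\big(T_1(T_2S-B)\big)$, and then one must set $S':=(T_2S-B)/\|T_2S-B\|\in L(\ell_2,X)$, a norm-one operator, to write $a_k\big(T_1(T_2S-B)\big)=\|T_2S-B\|\,a_k(T_1S')\le\big(x_n(T_2)+\varepsilon\big)\,x_k(T_1)$. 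Without manufacturing this $S'$ there is no way to make $x_k(T_1)$ appear, so the multiplicativity statement for $s_k=x_k$ is not established by your outline as written.
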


Estimates of  the Weyl numbers of embeddings of finite dimensional $\ell^N_p$, $N\in\N$,  will be very useful for us therefore we recall the already known estimates. Lemma \ref{xn1} and Lemma \ref{xn8} can be found in Caetano \cite{Cae_w3}. Lemma
\ref{xn2} is taken from K$\ddot{\rm o}$nig \cite[p. 186]{Ko86}, see
also in \cite{Cae_w3}.

\begin{lem}\label{xn1}
 Let $0< p_1\leq \max(2,p_2)\leq\infty$ and $1\leq k\leq N/2$, $N\in\N$. Then
\begin{equation}
x_k(\id:\ell_{p_1}^N\rightarrow \ell_{p_2}^N)\sim\left\{ \begin{array}{ll} k^{\frac{1}{p_2}-\frac{1}{p_1}}, & {\mathrm{if}\quad 0< p_1\leq p_2 \leq 2,} \\ 1,& {\mathrm{if}\quad 2\leq p_1\leq p_2\leq\infty,} \\ k^{\frac{1}{2}-\frac{1}{p_1}},&{\mathrm{if}\quad 0<p_1\leq  2\leq p_2\leq\infty,} \\ N^{\frac{1}{p_2}-\frac{1}{p_1}},& {\mathrm{if}\quad 0<	 p_2\leq p_1 \leq2.}\end{array}\right.
\label{eq:ali3}
\end{equation}
\end{lem}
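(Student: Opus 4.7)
The strategy exploits the defining formula $x_k(T)=\sup\{a_k(TS):S\in L(\ell_2,\ell_{p_1}^N),\,\|S\|\le 1\}$, combined with the ideal inequality $x_k\le a_k$ and the multiplicativity recorded in Proposition \ref{prop}. The four cases are treated separately, with the upper bounds obtained by factoring $\id:\ell_{p_1}^N\to\ell_{p_2}^N$ through $\ell_2^N$ (so as to exploit the natural role of $\ell_2$ in the very definition of the Weyl numbers) and the lower bounds obtained by constructing explicit test operators $S$.

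For the upper bounds, Case 2 ($2\le p_1\le p_2\le\infty$) and Case 4 ($0<p_2\le p_1\le 2$) follow at once from $x_k\le\|\id\|$, the operator norm being $1$ in the first case and $N^{1/p_2-1/p_1}$ in the second. Case 3 ($0<p_1\le 2\le p_2\le\infty$) is the heart of the argument: I would factor $\id=J\circ I$ with $I:\ell_{p_1}^N\to\ell_2^N$ and $J:\ell_2^N\to\ell_{p_2}^N$, both of norm at most one. Multiplicativity reduces matters to $x_k(I)\lesssim k^{1/2-1/p_1}$; for any $S:\ell_2\to\ell_{p_1}^N$ with $\|S\|\le 1$ the composition $IS$ lands in the Hilbert space $\ell_2^N$, so $a_k(IS)$ can be controlled by volume and Hilbert--Schmidt arguments of the Kashin--Garnaev--Gluskin type. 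Case 1 ($0<p_1\le p_2\le 2$) is then deduced from Case 3 by a further factorization step on a suitable $k$-dimensional subspace, producing the exponent $k^{1/p_2-1/p_1}$.

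For the matching lower bounds I would pick concrete test operators. In Case 2, take $S:\ell_2\to\ell_{p_1}^N$ to be the canonical embedding onto the first $k$ coordinates (which has norm at most one because $p_1\ge 2$); then $\id\circ S:\ell_2^k\to\ell_{p_2}^k$ has its $k$-th approximation number bounded below by a constant. Case 3 uses the same embedding appropriately rescaled, and Case 4 is settled by an operator whose image lies in a fixed ``almost spherical'' section of $\ell_{p_1}^N$. The principal obstacle is the lower bound in Case 1: both $\ell_{p_1}^N$ and $\ell_{p_2}^N$ sit below $\ell_2$, so the naive coordinate embedding $\ell_2\to\ell_{p_1}^N$ is not norm one, and the sharp construction requires either a Kashin-type random subspace argument or an appeal to duality with Case 3. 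Verifying that the resulting $a_k(\id\circ S)$ actually attains the prescribed order $k^{1/p_2-1/p_1}$ is the most technical step of the proof.
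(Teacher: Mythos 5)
First, note that the paper does not prove this lemma at all: it is quoted as a known result from Caetano's thesis \cite{Cae_w3} (see also K\"onig \cite{Ko86}), so there is no in-paper argument your sketch could be matched against; what you offer would have to stand on its own, and as it stands it has genuine gaps exactly at the points where the lemma has content. The two trivial upper bounds ($x_k\le\|\id\|$ in the cases $2\le p_1\le p_2$ and $p_2\le p_1\le 2$) and the general scheme ``factor through $\ell_2^N$ and use multiplicativity'' are fine, but the central estimate you rely on, $x_k(\id:\ell_{p_1}^N\rightarrow\ell_2^N)\lesssim k^{\frac12-\frac1{p_1}}$, is only asserted. ``Volume and Hilbert--Schmidt arguments of Kashin--Garnaev--Gluskin type'' is not a proof: KGG-type volume bounds control Gelfand and Kolmogorov widths, not the supremum over Hilbert-space factorizations defining $x_k$, and going through Gelfand numbers would in any case produce logarithmic factors that the lemma does not have. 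The clean route for $p_1=1$ is the $2$-summing estimate $a_k(IS)=s_k(IS)\le \pi_2(IS)k^{-1/2}\le \pi_2(\id:\ell_1^N\to\ell_2^N)\,k^{-1/2}$ via Grothendieck's theorem, but for $1<p_1<2$ this gives $N^{\frac1{p_1}-\frac12}k^{-\frac12}$ rather than $k^{\frac12-\frac1{p_1}}$, and for $0<p_1<1$ it gives only $k^{-1/2}$, which is strictly weaker than the claimed rate; the quasi-Banach range needs its own argument (this is precisely what \cite{Cae_w3} supplies). Your Case 1 upper bound (``a further factorization step on a suitable $k$-dimensional subspace'') is likewise not an argument: naively factoring through $\ell_2^N$ costs the norm $\|\id:\ell_2^N\to\ell_{p_2}^N\|=N^{\frac1{p_2}-\frac12}$, which destroys the bound for $k\ll N$, so some genuine interpolation-type mechanism must be made explicit.

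On the lower bounds, the proposed shortcut for Case 1 ``by duality with Case 3'' fails structurally: Weyl numbers are not self-dual (the dual $s$-scale consists of the Chang numbers), so no estimate for $\id:\ell_{p_2'}^N\to\ell_{p_1'}^N$ transfers to $x_k(\id:\ell_{p_1}^N\to\ell_{p_2}^N)$ by dualization. The only other device you name, a Kashin-type almost spherical section, is indeed the right kind of tool, but it is exactly the nontrivial step and is not carried out; in Case 4 it must yield the constant order $N^{\frac1{p_2}-\frac1{p_1}}$ uniformly for all $k\le N/2$ (note that the cheap estimate $x_k(\id:\ell_{p_1}^N\to\ell_{p_2}^N)\ge x_k(\id:\ell_{p_1}^N\to\ell_2^N)$ only gives $N^{\frac12-\frac1{p_1}}$, which is too small), and again the cases $p_1<1$ or $p_2<1$ are outside the Banach-space toolbox you invoke. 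So the proposal identifies a plausible architecture but leaves the decisive estimates (Case 3 upper bound for the full range $0<p_1\le 2$, and the lower bounds in Cases 1 and 4) unproved; for the purposes of this paper the honest course is what the authors do, namely cite \cite{Cae_w3} and \cite{Ko86}.
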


\begin{lem}\label{xn2}
Let $2\le p_2< p_1\le\infty$. Then there is a positive constant $C$ independent of
$N$ and $k$ such that for all $k,N\in\N$
\begin{equation}\label{upper21}
 x_k\left({\rm id}: \ell_{p_1}^N \rightarrow  \ell_{p_2}^N\right)
\le C \left( N/k\right)^{(1/{p_2}-1/{p_1})/(1-2/{p_1})}.
\end{equation}
\end{lem}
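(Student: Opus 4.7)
The plan is to follow König's approach: by the definition of the Weyl number,
\begin{equation*}
x_k(\id:\ell_{p_1}^N\to\ell_{p_2}^N) = \sup\{a_k(\id\circ S) : S\in L(\ell_2,\ell_{p_1}^N),\ \|S\|\le 1\},
\end{equation*}
so it suffices to bound $a_k(\id\circ S)$ uniformly in $S$. I introduce $\theta\in(0,1]$ by $1/p_2=(1-\theta)/p_1+\theta/2$; since $p_1>2$ this is well defined, and a short computation gives $\theta/2=(1/p_2-1/p_1)/(1-2/p_1)$, the exponent appearing in the claim.

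To build the approximating operator I would use the singular-value decomposition of $S$ regarded as a Hilbert-space map $\ell_2\to\ell_2^N$. Let $P$ be the orthogonal projection of $\ell_2$ onto the span of the top $k-1$ right-singular vectors of $S$, so that $\id\circ S\circ P$ has rank at most $k-1$ and $\|S(I-P)\|_{\ell_2\to\ell_2^N}=\sigma_k(S)$. Two bounds on $S(I-P)$ are then immediate. Trivially $\|S(I-P)\|_{\ell_2\to\ell_{p_1}^N}\le\|S\|\le 1$, and since $\|S^*e_j\|_2\le\|S^*\|\,\|e_j\|_{p_1'}\le 1$ for each $j$ (with $1/p_1+1/p_1'=1$), the Hilbert-Schmidt norm of $S:\ell_2\to\ell_2^N$ satisfies $\|S\|_{\mathrm{HS}}^2=\sum_{j=1}^N\|S^*e_j\|_2^2\le N$, forcing $\sigma_k(S)\le (N/k)^{1/2}$.

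I then combine the two estimates using the pointwise log-convexity of $\ell_p$-norms on $\R^N$,
\begin{equation*}
\|y\|_{p_2}\le\|y\|_{p_1}^{1-\theta}\,\|y\|_{2}^{\theta}, \qquad y\in\R^N,
\end{equation*}
applied to $y=S(I-P)x$, which yields $\|S(I-P)x\|_{p_2}\le (N/k)^{\theta/2}\|x\|_2$ and hence $a_k(\id\circ S)\le C(N/k)^{(1/p_2-1/p_1)/(1-2/p_1)}$ uniformly in $S$. Taking the supremum over $S$ completes the proof. The only real insight is the pairing of the SVD-chosen projection (which controls the $\ell_2$-error via the Hilbert-Schmidt bound) with log-convexity (which transfers that control to the $\ell_{p_2}^N$-norm); after that, the verification is direct.
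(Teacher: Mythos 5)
Your proof is correct, and with constant $C=1$ at that: the rank-$(k-1)$ projection coming from the singular value decomposition of $S$, the Hilbert--Schmidt estimate $\sigma_k(S)\le (N/k)^{1/2}$ (via $\|S^*e_j\|_2\le 1$ for each coordinate functional), and the log-convexity inequality $\|y\|_{p_2}\le\|y\|_{p_1}^{1-\theta}\|y\|_2^{\theta}$ with $\theta/2=(1/p_2-1/p_1)/(1-2/p_1)$ fit together exactly as you describe, the hypothesis $2\le p_2<p_1\le\infty$ entering precisely to guarantee $\theta\in(0,1]$, and the case $k>N$ being trivial since the Weyl numbers vanish there. The paper itself gives no proof of this lemma, only the citation to K\"onig \cite[p.~186]{Ko86} (see also \cite{Cae_w3}), and your argument is the standard one behind that reference, so beyond recording that agreement there is nothing to flag.
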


\begin{lem}
\label{xn8}
Let $0 < p_2 \leq 2 < p_1 \leq\infty$. Then there is a positive constant $C$ independent of
$N$ and $k$ such that for all $k,N\in\N$
\begin{equation}
x_k(\id:\ell_{p_1}^N\rightarrow \ell_{p_2}^N)\geq C \left\{ \begin{array}{ll} N^{\frac{1}{p_2} - \frac{1}{2}}, & {\mathrm{if}\quad  k\leq N/2,} \\ N^{\frac{1}{p_2} - \frac{1}{p_1}},&{\mathrm{if}\quad k\leq N^{\frac{2}{p_1}},} \end{array}\right.
\label{eq:ali5}
\end{equation}
\end{lem}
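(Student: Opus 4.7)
The plan is to prove both lower bounds by exhibiting, for each $k$ in the stated range, a specific operator $S : \ell_2^M \to \ell_{p_1}^N$ with $\|S\| \leq 1$ (for an appropriate $M \geq k$) and then reading off the bound from the definition
$$ x_k(\id: \ell_{p_1}^N \to \ell_{p_2}^N) \;\geq\; a_k\bigl(\id\circ S : \ell_2^M \to \ell_{p_2}^N\bigr). $$
In each case the right–hand side will be bounded below by a classical approximation–number estimate for embeddings out of a Hilbert space into an $\ell_{p_2}$–space with $p_2\le 2$.

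For the first bound, with $k\le N/2$, I would simply take $S=\id:\ell_2^N\to\ell_{p_1}^N$. Since $p_1\ge 2$, one has $\|x\|_{p_1}\le\|x\|_2$ for every $x\in\R^N$, so $\|S\|\le 1$. The composition equals $\id:\ell_2^N\to\ell_{p_2}^N$, and the classical Gluskin–Kashin lower bound for approximation numbers of the embedding of an $N$–dimensional Hilbert space into $\ell_{p_2}^N$ with $0<p_2\le 2$ gives $a_k(\id:\ell_2^N\to\ell_{p_2}^N)\ge c\,N^{1/p_2-1/2}$ in the range $k\le N/2$. This delivers the first estimate.

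For the second bound, with $k\le N^{2/p_1}$, a single identity map no longer suffices, because it would only reproduce the first bound. Here I would use a Kashin/Dvoretzky–type near–Euclidean embedding $S:\ell_2^M\to\ell_{p_1}^N$ with $M\sim N^{2/p_1}$ and $\|S\|\le 1$, obtained either by a rescaled Gaussian matrix or via Kashin's decomposition of $\ell_{p_1}^N$ into two almost–Euclidean subspaces. The scaling is chosen so that $S$ produces, from a unit vector of $\ell_2^M$, a vector of $\ell_{p_1}$–norm at most $1$ whose coordinates are "spread out" on the order of $N^{-1/p_1}$; then $\id\circ S:\ell_2^M\to\ell_{p_2}^N$ has operator norm of order $N^{1/p_2-1/p_1}$, and a Bernstein/volume argument (or again a Gluskin–Kashin estimate, applied this time to the $M$–dimensional section) ensures that every rank–$<k$ approximation of $\id\circ S$ leaves an error of the same order, for all $k\le M$. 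Combined with the factorization inequality, this yields $x_k\ge c\,N^{1/p_2-1/p_1}$ throughout $k\le N^{2/p_1}$.

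The main obstacle is this second step: one must verify that the Kashin/Gaussian embedding $S$ can be produced with $\|S\|\le 1$ while the composed operator $\id\circ S$ retains full operator norm $\asymp N^{1/p_2-1/p_1}$ in the approximation–number sense down to rank $k\le N^{2/p_1}$. The cutoff $N^{2/p_1}$ is precisely the critical dimension of Euclidean sections of $\ell_{p_1}^N$ for $p_1>2$, which is why the range of $k$ cannot be pushed further by this factorization scheme. The rest of the argument — the passage from a lower estimate on $a_k(\id\circ S)$ to one on $x_k(\id:\ell_{p_1}^N\to\ell_{p_2}^N)$ — is immediate from the definition of the Weyl numbers and the multiplicativity/monotonicity properties collected in Proposition~\ref{prop}.
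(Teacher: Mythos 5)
The paper itself contains no proof of Lemma \ref{xn8}: it is quoted from Caetano's thesis \cite{Cae_w3}, and the argument there is of exactly the type you propose, namely inserting a suitable operator $S$ of norm at most one from a Euclidean space into $\ell_{p_1}^N$ and using $x_k(T)\ge a_k(TS)$. Your first estimate is correct, and in fact needs no Gluskin--Kashin input: since $\|\id:\ell_2^N\to\ell_{p_1}^N\|=1$, multiplicativity (Proposition \ref{prop}) gives $x_k(\id:\ell_2^N\to\ell_{p_2}^N)\le x_k(\id:\ell_{p_1}^N\to\ell_{p_2}^N)$, and the left-hand side is $\sim N^{1/p_2-1/2}$ for $k\le N/2$ by Lemma \ref{xn1} (the case $0<p_2\le p_1\le 2$ with $p_1=2$); alternatively, $a_k(\id:\ell_2^N\to\ell_{p_2}^N)\ge c_k(\id:\ell_2^N\to\ell_{p_2}^N)=(N-k+1)^{1/p_2-1/2}$ by Lemma \ref{gn}.

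For the second estimate your plan is the standard one, but two points need repair before it is a proof. First, there is no ``Kashin decomposition of $\ell_{p_1}^N$ into two almost-Euclidean subspaces'' when $2<p_1<\infty$: the largest dimension of a subspace of $\ell_{p_1}^N$ that is $C$-isomorphic to a Euclidean space is of order $N^{2/p_1}$ (Figiel--Lindenstrauss--Milman, Bennett--Dor--Goodman--Johnson--Newman), so only your Gaussian/Dvoretzky-type alternative is available -- and this is precisely the source of the cutoff $k\le N^{2/p_1}$. Second, it is not enough that $\id\circ S$ ``retains operator norm $\asymp N^{1/p_2-1/p_1}$''; what the argument needs is the uniform lower bound $\|(\id\circ S)x\|_{p_2}\ge c\,N^{1/p_2-1/p_1}\|x\|_2$ for \emph{all} $x\in\ell_2^M$, since then for any $A$ with $\rank A<k\le M$ a unit vector in the nontrivial kernel of $A$ gives $\|\id\circ S-A\|\ge c\,N^{1/p_2-1/p_1}$, hence $a_k(\id\circ S)\ge c\,N^{1/p_2-1/p_1}$. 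This forces the section to be flat simultaneously for the $\ell_{p_1}$- and the $\ell_{p_2}$-(quasi-)norms: for $1\le p_2\le 2$ it follows from a Kashin-type estimate $\|y\|_1\ge c\sqrt N\,\|y\|_2$ on the section together with H\"older, while for $0<p_2<1$ one must additionally pass from the $\ell_1$ bound to $\|y\|_{p_2}\ge c\,N^{1/p_2-1/2}\|y\|_2$ via the interpolation inequality $\|y\|_1\le\|y\|_{p_2}^{\theta}\|y\|_2^{1-\theta}$; neither verification appears in your sketch, and they are the actual content of the lemma. Finally, the section theorems give dimension $M\ge c_{p_1}N^{2/p_1}$ with possibly $c_{p_1}<1$, whereas the lemma (and its use in Step 1 of the proof of Theorem \ref{q0.2}, where $k=[M_j^{2/p_1}]$) asserts the bound for all $k\le N^{2/p_1}$; you must either push the dimension up to $N^{2/p_1}$ at the cost of a larger, still $p_1$-dependent, distortion constant, or state the range with an extra constant.
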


\begin{lem}\label{xn4}
Let $2< p_2< p_1\le\infty$. Then there is a positive constant $C$ independent of
$N$ and $k$ such that for all $k,N\in\N$
\begin{equation} \label{low2p2p1}
 x_k\left({\rm id}: \ell_{p_1}^N \rightarrow  \ell_{p_2}^N\right)
\ge C  \ \ \ {\rm if}\ \
k\le N/4.
\end{equation}
\end{lem}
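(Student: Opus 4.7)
The plan is to use the definition of Weyl numbers as a supremum $x_k(T) = \sup\{a_k(TS) : S \in L(\ell_2, X),\ \|S\|\le 1\}$ in order to reduce the problem to a lower bound on $a_k(\id : \ell_2^N \to \ell_{p_2}^N)$, and then settle the latter by a short kernel-versus-trace argument.

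For the reduction I would take $S : \ell_2 \to \ell_{p_1}^N$ to be the coordinate truncation $Sx = (x_1, \dots, x_N)$. Since $p_1 \ge 2$, the monotonicity $\|\cdot\|_{p_1} \le \|\cdot\|_2$ on $\R^N$ gives $\|S\| \le 1$. The composition $TS$ is supported on the first $N$ coordinates, so precomposing any rank-$<k$ approximant of $\id : \ell_2^N \to \ell_{p_2}^N$ with the truncation shows $a_k(TS) = a_k(\id : \ell_2^N \to \ell_{p_2}^N)$. Hence $x_k(\id : \ell_{p_1}^N \to \ell_{p_2}^N) \ge a_k(\id : \ell_2^N \to \ell_{p_2}^N)$, and it suffices to bound this last quantity from below by an absolute positive constant for all $k \le N/4$.

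For the reduced inequality, fix any $A \in L(\ell_2^N, \ell_{p_2}^N)$ with $\rank A < k \le N/4$. Its kernel $M := \ker A$ has dimension $\ge N - k + 1 > N/2$. Writing $P_M$ for the orthogonal projection onto $M$, the identity $\mathrm{tr}(P_M) = \dim M$ reads $\sum_{i=1}^N \|P_M e_i\|_2^2 > N/2$, so some index $i_0$ satisfies $\|P_M e_{i_0}\|_2 > 1/\sqrt{2}$. Setting $x := P_M e_{i_0}/\|P_M e_{i_0}\|_2 \in M$ one checks $\|x\|_2 = 1$ and $|x_{i_0}| = \|P_M e_{i_0}\|_2 > 1/\sqrt{2}$, whence $\|x\|_{p_2} \ge \|x\|_\infty > 1/\sqrt{2}$. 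Since $Ax = 0$, this yields $\|\id - A\|_{\ell_2^N \to \ell_{p_2}^N} \ge \|x\|_{p_2} > 1/\sqrt{2}$; taking infimum over $A$ gives $a_k(\id : \ell_2^N \to \ell_{p_2}^N) \ge 1/\sqrt{2}$, which is the required lower bound.

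I do not expect a serious obstacle; the argument is essentially immediate once the correct $S$ is spotted. The only bookkeeping issue is the identification $a_k(TS) = a_k(\id : \ell_2^N \to \ell_{p_2}^N)$ on the infinite-dimensional $\ell_2$, which is routine. It is worth noting that the hypothesis $p_1 > p_2$ plays no role above and the same proof actually covers the wider range $k \le N/2$, so the restriction $k \le N/4$ in the statement is loose.
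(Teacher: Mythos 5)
Your proof is correct, but it follows a different route from the paper. The paper gets the bound purely by multiplicativity of Weyl numbers: it writes $C_1\le x_{2k}(\id:\ell_2^N\to\ell_{p_2}^N)\le x_k(\id:\ell_{p_1}^N\to\ell_{p_2}^N)\,x_k(\id:\ell_2^N\to\ell_{p_1}^N)$ and then invokes Lemma \ref{xn1} (Caetano's two-sided estimate $x_k(\id:\ell_2^N\to\ell_{p_2}^N)\sim 1$ for $k\le N/2$) for the left-hand constant and for bounding the second factor; the index doubling in the additivity/multiplicativity step is precisely why the paper only claims the range $k\le N/4$. You instead go back to the definition of $x_k$ as $\sup_{\|S\|\le 1}a_k(TS)$, choose the concrete coordinate truncation $S:\ell_2\to\ell_{p_1}^N$ (where only $p_1\ge 2$ is used), and reprove from scratch the lower bound $a_k(\id:\ell_2^N\to\ell_{p_2}^N)\ge 1/\sqrt2$ via the kernel-plus-trace argument; in effect you reprove the relevant case of Lemma \ref{xn1} rather than cite it. All steps check out: $\|S\|\le1$ since $\|\cdot\|_{p_1}\le\|\cdot\|_2$, the identification $a_k(TS)=a_k(\id:\ell_2^N\to\ell_{p_2}^N)$ follows from norm-one factorizations in both directions, rank--nullity gives $\dim\ker A\ge N-k+1>N/2$, and $\operatorname{tr}P_M=\sum_i\|P_Me_i\|_2^2$ produces the coordinate-heavy unit vector killed by $A$ (the argument is also valid over $\C$). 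What your approach buys is self-containedness (no appeal to the unproved-in-the-paper lower bound of Lemma \ref{xn1}), an explicit constant $1/\sqrt2$, and the wider range $k\le N/2$, confirming your closing remark that the paper's restriction $k\le N/4$ is only an artifact of its composition argument; what the paper's approach buys is brevity, since Lemma \ref{xn1} is already on the table there.
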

\begin{proof}
Using the multiplicativity of the Weyl numbers, together with Lemma
\ref{xn1} and the hypothesis $k\le\frac{N}{4}$, we
can write
\begin{equation*}
\begin{split}
C_1 \le x_{2k}\left({\rm id}: \ell_2^N \rightarrow  \ell_{p_2}^N\right)&\le
x_k\left({\rm id}: \ell_{p_1}^N \rightarrow  \ell_{p_2}^N\right)x_k\left({\rm
id}: \ell_2^N \rightarrow  \ell_{p_1}^N\right)\\ &\le C_2 x_k\left({\rm id}: \ell_{p_1}^N \rightarrow  \ell_{p_2}^N\right),
\end{split}
\end{equation*}
which completes the proof.
\end{proof}

The following result was already proved in the case $1\le p_2\le 2<
p_1\le\infty$; see \cite[p. 186]{Ko86}.

\begin{lem}\label{xn5}
Let $0< p_2\le 2< p_1\le\infty$. Then there is a positive constant $C$ independent of
$N$ and $k$ such that for all $k,N\in\N$
\begin{equation}
 x_k\left({\rm id}: \ell_{p_1}^N \rightarrow  \ell_{p_2}^N\right)
\le C N^{\frac{1}{{p_2}}}k^{-\frac{1}{2}}.
\end{equation}
\end{lem}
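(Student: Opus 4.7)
The plan is to factor the identity through $\ell_2^N$ and exploit multiplicativity of Weyl numbers, reducing to the Banach-space bound of Lemma \ref{xn2}. Write
\[
\id:\ell_{p_1}^N\to\ell_{p_2}^N \;=\; I_1\circ I_2,
\qquad I_2=\id:\ell_{p_1}^N\to\ell_2^N,\quad I_1=\id:\ell_2^N\to\ell_{p_2}^N.
\]
By Proposition \ref{prop}(3) (multiplicativity, applied with $n=1$, using $s_1=\|\cdot\|$), one obtains $x_k(I_1\circ I_2)\le \|I_1\|\,x_k(I_2)$. Since the intermediate space $\ell_2^N$ and the source $\ell_{p_1}^N$ are Banach and only the target $\ell_{p_2}^N$ is (possibly) merely quasi-Banach, the multiplicativity statement in Proposition \ref{prop} applies without any additional care.

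For the first factor, a direct application of H\"older's inequality (whose scalar version is valid for all $0<p_2\le 2$, independently of whether $p_2<1$ or not) gives
\[
\|I_1\|=\|\id:\ell_2^N\to\ell_{p_2}^N\|=N^{1/p_2-1/2}.
\]
For the second factor, Lemma \ref{xn2} is applicable with the role of $p_2$ taken by $2$, since $2<p_1\le\infty$. The exponent in that lemma simplifies:
\[
\frac{1/2-1/p_1}{1-2/p_1}=\frac{(p_1-2)/(2p_1)}{(p_1-2)/p_1}=\tfrac12,
\]
so that $x_k(I_2)\le C (N/k)^{1/2}$ with $C$ independent of $N$ and $k$.

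Multiplying the two estimates yields
\[
x_k(\id:\ell_{p_1}^N\to\ell_{p_2}^N)\le N^{1/p_2-1/2}\cdot C(N/k)^{1/2}=C\,N^{1/p_2}\,k^{-1/2},
\]
which is exactly the claim. The only conceptual point worth checking is that multiplicativity of $x_k$ remains valid in the quasi-Banach setting that arises when $p_2<1$; this is precisely what is asserted in Proposition \ref{prop}, so no further work is needed. All other ingredients are either elementary (the norm of $I_1$) or taken verbatim from Lemma \ref{xn2}, and the argument extends uniformly over the whole range $0<p_2\le 2<p_1\le\infty$, thus also covering the previously unconsidered subrange $0<p_2<1$.
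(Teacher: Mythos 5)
Your proof is correct and follows essentially the same route as the paper: factor the embedding through $\ell_2^N$, use multiplicativity of the Weyl numbers, and apply Lemma \ref{xn2} with target exponent $2$ (so the exponent becomes $1/2$) to the factor $\id:\ell_{p_1}^N\to\ell_2^N$. The only cosmetic difference is that you estimate the factor $\id:\ell_2^N\to\ell_{p_2}^N$ by its operator norm $N^{1/p_2-1/2}$ (i.e.\ take index $1$ in the multiplicativity), while the paper invokes Lemma \ref{xn1} at index $k\le N/2$ and then uses monotonicity to pass from $x_{2k}$ back to $x_k$; both yield the same bound $C N^{1/p_2}k^{-1/2}$.
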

\begin{proof}
Using the multiplicativity of the Weyl numbers, together with Lemma~\ref{xn1}, Lemma \ref{xn2} and the hypothesis $k\le\frac{N}{2}$, we
can write
\begin{equation*}
\begin{split}
x_{2k}\left({\rm id}: \ell_{p_1}^N \rightarrow  \ell_{p_2}^N\right) &\le
x_k\left({\rm id}: \ell_2^N \rightarrow  \ell_{p_2}^N\right)x_k\left({\rm id}:
\ell_{p_1}^N \rightarrow  \ell_2^N\right)\\
&\le C N^{\frac{1}{{p_2}}}k^{-\frac{1}{2}}\left( N/k\right)^{(1/2-1/{p_1})/(1-2/{p_1})}
\le C N^{\frac{1}{{p_2}}}k^{-\frac{1}{2}},\end{split}
\end{equation*}

which finishes the proof, by the monotonicity of the Weyl numbers.
\end{proof}

The following lemma for Gelfand numbers will be useful for the upper estimates of Weyl numbers.

\begin{lem}\label{gn}
If~ $1\le k\le N<\infty$\ and $0<p_2\le p_1\le\infty,$ then
$$
c_k\big({\rm id}: \ell_{p_1}^N \rightarrow  \ell_{p_2}^N\big)=
(N-k+1)^{\frac{1}{p_2}-\frac{1}{p_1}}.
$$
\end{lem}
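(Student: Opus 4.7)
The plan is to establish a matching pair of bounds for $c_k$. For the upper bound, choose the closed subspace $M_0:=\{x\in\R^N : x_1=\cdots=x_{k-1}=0\}$, of codimension $k-1<k$. Each $x\in M_0$ is supported on at most $m:=N-k+1$ coordinates, so the standard embedding inequality $\|x\|_{p_2}\le m^{1/p_2-1/p_1}\|x\|_{p_1}$ (a consequence of H\"older's inequality applied to $a_i=|x_i|^{p_2}$ with exponent $p_1/p_2\ge 1$, hence valid also in the quasi-Banach range) applies. By the definition of Gelfand numbers this gives $c_k(\id)\le m^{1/p_2-1/p_1}$.

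For the lower bound, take an arbitrary closed subspace $M\subset\ell_{p_1}^N$ with $\mathrm{codim}\,M\le k-1$, so that $\dim M\ge m$. It suffices to exhibit one nonzero $v\in M$ with $\|v\|_{p_2}\ge m^{1/p_2-1/p_1}\,\|v\|_{p_1}$, since this forces the operator norm $\|\id\,J_M^{\ell_{p_1}^N}\|$ to be at least $m^{1/p_2-1/p_1}$. The source of such a $v$ is a convex-geometric observation: $K:=M\cap[-1,1]^N$ is a bounded convex polytope of dimension $\dim M$ containing the origin in its relative interior, so it has a vertex. At any vertex $v\in K$, the gradients (restricted to $M$) of the tight coordinate constraints $x_i=\pm1$ must span $M$; since $j$ vectors span at most a $j$-dimensional space, at least $\dim M\ge m$ distinct indices $i$ must satisfy $|v_i|=1$. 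Let $I:=\{i:|v_i|=1\}$, $m':=|I|\ge m$, and observe that $|v_i|\le 1$ for $i\notin I$.

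It remains to convert this into the desired norm ratio. Put $a:=\sum_{i\notin I}|v_i|^{p_2}$ and $b:=\sum_{i\notin I}|v_i|^{p_1}$; since $|v_i|\le1$ and $p_2\le p_1$, one has $b\le a$. Then $\|v\|_{p_2}^{p_2}=m'+a$ and $\|v\|_{p_1}^{p_1}=m'+b$, so the target inequality reduces to $A^{1/p_2}\ge B^{1/p_1}$, where $A:=(m'+a)/m\ge B:=(m'+b)/m\ge m'/m\ge 1$. From $1/p_2\ge 1/p_1\ge 0$ and $A\ge B\ge 1$ one obtains $A^{1/p_2}\ge A^{1/p_1}\ge B^{1/p_1}$, completing the bound (the limiting cases $p_1=\infty$ or $p_2=\infty$ are handled by the usual conventions and are strictly easier). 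The main obstacle is the convex-geometric step producing $v$: one must correctly exploit the polytopal structure of $K$ and count dimensions among the active constraints to guarantee at least $\dim M$ coordinates of modulus one; the remainder is a short algebraic manipulation.
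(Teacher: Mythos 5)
Your proof is correct and is essentially the classical argument that the paper invokes by citing Pietsch (Operator Ideals, Section 11.11.4) and Pinkus: the coordinate subspace $\{x:x_1=\cdots=x_{k-1}=0\}$ gives the upper bound, and the lower bound rests on the Stesin-type extreme-point lemma that for any subspace $M$ with $\dim M\ge N-k+1$ the polytope $M\cap[-1,1]^N$ has a vertex with at least $\dim M$ coordinates of modulus one, which yields the required norm ratio. Since neither step uses the triangle inequality of the target quasi-norm, your write-up also substantiates the paper's remark that the Banach-space proof carries over verbatim to the quasi-Banach range $0<p_2\le p_1\le\infty$.
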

The proof of this lemma follows literally \cite[Section
11.11.4]{Pie78}, see also \cite{Pin85}.
Indeed the original proof is used only to deal with the Banach setting. However, the same proof works also in the quasi-Banach setting  $0<p_2\le p_1\le\infty$.

Based on the definitions of Weyl and Gelfand numbers, it is known that for any linear continuous operator $T$ between two complex quasi-Banach spaces, $x_k(T)\le c_k(T)$. Then in terms of Lemma \ref{xn2}, Lemma \ref{xn5} and Lemma \ref{gn}, we have the following two propositions.

\begin{proposition}\label{xn6}
Let $0< p_2\le 2< p_1\le\infty$. Then there is a positive constant $C$ independent of
$N$ and $k$ such that for all $k,N\in\N$
\begin{equation}\label{eq:ali6}
 x_k\left({\rm id}: \ell_{p_1}^N \rightarrow  \ell_{p_2}^N\right)
\le C
\begin{cases}
N^{\frac 1{p_2}-\frac 1{p_1}},
 & {\rm if}\, ~ 1\le k \le N^{\frac 2{p_1}},\\
N^{\frac 1{p_2}}k^{-\frac 12},
 & {\rm if}\, ~ N^{\frac 2{p_1}}\le k\le N.
\end{cases}
\end{equation}
\end{proposition}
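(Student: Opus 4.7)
The plan is to combine the relation $x_k(T)\le c_k(T)$ from Proposition \ref{prop}(4) with two of the preceding lemmas: the exact Gelfand-number formula in Lemma \ref{gn} will give the first bound (small $k$ regime), while Lemma \ref{xn5} will give the second bound (large $k$ regime). One simply has to verify that together these cover the full range $1\le k\le N$ and that, under the hypothesis $0<p_2\le 2<p_1\le\infty$, the exponent $\frac{1}{p_2}-\frac{1}{p_1}$ is strictly positive, so that the Gelfand-number bound simplifies cleanly.

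For the range $1\le k\le N^{2/p_1}$, I would invoke Lemma \ref{gn} to write
\[
x_k\bigl(\mathrm{id}\colon \ell_{p_1}^N\to \ell_{p_2}^N\bigr)\le c_k\bigl(\mathrm{id}\colon \ell_{p_1}^N\to \ell_{p_2}^N\bigr)=(N-k+1)^{\frac{1}{p_2}-\frac{1}{p_1}}\le N^{\frac{1}{p_2}-\frac{1}{p_1}},
\]
which is the desired first estimate. Note this argument actually works for all $1\le k\le N$, but it is sharp only for small $k$.

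For the range $N^{2/p_1}\le k\le N$, Lemma \ref{xn5} directly provides $x_k\le C N^{1/p_2}k^{-1/2}$ whenever $k\le N/2$. For the leftover tail $N/2<k\le N$ I would use the monotonicity property of $s$-numbers from Proposition \ref{prop}(1): since $k^{-1/2}\ge N^{-1/2}$ on this tail, one has
\[
x_k\le x_{\lfloor N/2\rfloor}\le C\, N^{1/p_2}\,(N/2)^{-1/2}\le C'\, N^{1/p_2}\, k^{-1/2},
\]
so the bound extends to the full range. A consistency check at the boundary value $k=N^{2/p_1}$ shows $N^{1/p_2}\,k^{-1/2}=N^{1/p_2-1/p_1}$, so the two piecewise bounds agree where they meet.

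The main obstacle is essentially absent here: all the real work has already been done in Lemmas \ref{xn2}, \ref{xn5} and \ref{gn}, and the statement is a convenient repackaging of those bounds via $x_k\le c_k$. The only subtlety worth writing out is checking the boundary matching at $k=N^{2/p_1}$ and handling the $k>N/2$ tail by monotonicity, both of which are routine.
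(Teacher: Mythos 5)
Your proposal is correct and coincides with the paper's own (very brief) justification: the paper derives Proposition \ref{xn6} exactly by combining $x_k\le c_k$ with the Gelfand-number formula of Lemma \ref{gn} for $1\le k\le N^{2/p_1}$ and with Lemma \ref{xn5} for the remaining range. Your extra remarks (monotonicity for the tail $N/2<k\le N$ and the boundary check at $k=N^{2/p_1}$) are routine details the paper leaves implicit.
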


\begin{proposition}\label{xn7}
Let $2\leq p_2< p_1\le\infty$. Then there is a positive constant $C$ independent of
$N$ and $k$ such that for all $k,N\in\N$
\begin{equation}\label{wn2p2p1}
 x_k\left({\rm id}: \ell_{p_1}^N \rightarrow  \ell_{p_2}^N\right)
\le C
\begin{cases}
N^{\frac 1{p_2}-\frac 1{p_1}},
 & {\rm if}\, ~ 1\le k \le N^{\frac 2{p_1}},\\
\left( \frac Nk\right)^{(1/{p_2}-1/{p_1})/(1-2/{p_1})},
 & {\rm if}\, ~ N^{\frac 2{p_1}}\le k\le N.
\end{cases}
\end{equation}
\end{proposition}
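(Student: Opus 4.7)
The plan is to obtain Proposition \ref{xn7} by interpolating between two already established bounds via the elementary inequality $x_k(T)\le c_k(T)$ and a case split at the threshold $k=N^{2/p_1}$.

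First, for the range $N^{2/p_1}\le k\le N$ I would simply invoke Lemma \ref{xn2}, which was stated for all $k,N\in\N$ and already delivers the bound
\[
x_k(\id:\ell_{p_1}^N\to\ell_{p_2}^N)\le C\left(\tfrac{N}{k}\right)^{(1/p_2-1/p_1)/(1-2/p_1)}.
\]
Thus the second branch of \eqref{wn2p2p1} is immediate and requires no further argument.

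For the range $1\le k\le N^{2/p_1}$ I would pass through the Gelfand numbers. By Proposition \ref{prop}(4) we have $x_k(T)\le c_k(T)$ for every $T\in L(X,Y)$. Apply this to $T=\id:\ell_{p_1}^N\to\ell_{p_2}^N$ and use Lemma \ref{gn}, which is valid in the present quasi-Banach setting since $0<p_2\le p_1\le\infty$, to obtain
\[
x_k(\id:\ell_{p_1}^N\to\ell_{p_2}^N)\le c_k(\id:\ell_{p_1}^N\to\ell_{p_2}^N)=(N-k+1)^{1/p_2-1/p_1}.
\]
Since $2\le p_2<p_1$ we have $1/p_2-1/p_1>0$, and the trivial bound $N-k+1\le N$ then gives
\[
(N-k+1)^{1/p_2-1/p_1}\le N^{1/p_2-1/p_1},
\]
which is the first branch of \eqref{wn2p2p1}.

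There is essentially no serious obstacle here; the work has been done in Lemma \ref{xn2} and Lemma \ref{gn}. The only point that deserves a moment's attention is consistency at the junction $k\sim N^{2/p_1}$: plugging $k=N^{2/p_1}$ into the second branch yields $N^{(1-2/p_1)(1/p_2-1/p_1)/(1-2/p_1)}=N^{1/p_2-1/p_1}$, matching the first branch, so the two estimates agree (up to constants) at the breakpoint and the case split is natural. No additional monotonicity or interpolation argument is needed.
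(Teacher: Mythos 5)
Your proposal is correct and coincides with the paper's own argument: the authors obtain Proposition \ref{xn7} exactly by combining the inequality $x_k(T)\le c_k(T)$ with Lemma \ref{gn} (giving the bound $N^{\frac{1}{p_2}-\frac{1}{p_1}}$ for small $k$) and by quoting Lemma \ref{xn2} for the range $N^{\frac{2}{p_1}}\le k\le N$. Nothing further is needed.
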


Of course when $k > N$ then
$$x_k(\id:\ell_{p_1}^N\rightarrow \ell_{p_2}^N)=0.$$

  \section{The embeddings of sequence spaces}

  In this part we consider the embeddings of sequence spaces  from Definition~\ref{ciagi2}, that will play an important role in this paper. In \cite {HL} we find the following theorem.

\begin{thm}
Let $0<p_1, p_2\leq \infty$, $0<q_1,q_2 \leq\infty$. $\{M_j\}_{j=0}^\infty$ be an arbitrary sequence  of natural numbers and $\{\beta_j\}_{j=0}^\infty$ be an arbitrary weight sequence. The embedding
$$\id: \ell_{q_1}(\beta_j\ell_{p_1}^{M_j})\rightarrow \ell_{q_2}(\ell_{p_2}^{M_j})$$
is compact if and only if
$$\big\{\beta_j^{-1}M_j^{({1}/{p_2}-{1}/{p_1})_+}\big\}_{j=0}^\infty\in \ell_{q^*}\quad \mathrm{if}\quad q^*<\infty$$ or
$$\lim_{j\rightarrow\infty}(\beta_j^{-1}M_j^{({1}/{p_2}-{1}/{p_1})_+})=0\quad \mathrm{if}\quad q^*=\infty,$$
where
$$\frac{1}{q^*}=\bigg(\frac{1}{q_2}-\frac{1}{q_1}\bigg)_+\quad\mathrm{i.e.}\quad q^*=\left\{ \begin{array}{ll} \infty, & {\mathrm{if}\quad 0< q_1\leq q_2 \leq\infty,} \\(q_1q_2)/(q_1-q_2),& {\mathrm{if}\quad 0< q_2<q_1<\infty,} \\ q_2,&{\mathrm{if}\quad q_1=\infty.} \end{array}\right.$$
\end{thm}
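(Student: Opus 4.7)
The plan is to reduce the problem to the classical compactness criterion for a diagonal operator between $\ell_{q_1}$ and $\ell_{q_2}$. The key observation is that the embedding acts blockwise: at level $j$ it is the scaled identity $\beta_j^{-1}\,\mathrm{id}_j$ with $\mathrm{id}_j:\ell_{p_1}^{M_j}\to\ell_{p_2}^{M_j}$, and an elementary H\"older computation gives $\|\mathrm{id}_j\|=M_j^{(1/p_2-1/p_1)_+}$, with equality attained by vectors constant across all $M_j$ coordinates when $p_2<p_1$ and by single unit vectors when $p_1\le p_2$. Hence the natural candidate for the decisive sequence is $\sigma_j:=\beta_j^{-1}M_j^{(1/p_2-1/p_1)_+}$, and the aim becomes to show that compactness of the embedding is equivalent to compactness of the diagonal operator $\mathrm{diag}(\sigma_j):\ell_{q_1}\to\ell_{q_2}$.

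For sufficiency I would truncate at some level $J$: since each $M_j$ is finite, the partial map onto levels $j<J$ has finite rank, so it suffices to bound the operator norm on vectors supported on $j\ge J$. A direct computation using $\|x_j\|_{\ell_{p_2}^{M_j}}\le \sigma_j\beta_j\|x_j\|_{\ell_{p_1}^{M_j}}$ together with H\"older's inequality (or the monotone inclusion $\ell_{q_1}\hookrightarrow\ell_{q_2}$ in the unweighted case) yields that the tail norm is controlled by the tail $\ell_{q^*}$-norm of $(\sigma_j)$ when $q^*<\infty$, and by $\sup_{j\ge J}\sigma_j$ when $q^*=\infty$; the assumption forces these to vanish as $J\to\infty$, giving compactness by norm approximation of a compact operator.

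For necessity I would argue by contraposition: if the relevant tail $\ell_{q^*}$-norm or supremum does not vanish, I construct a bounded sequence in $\ell_{q_1}(\beta_j\ell_{p_1}^{M_j})$ whose images do not cluster in $\ell_{q_2}(\ell_{p_2}^{M_j})$. The test vectors at level $j$ are taken to be the extremizers of $\mathrm{id}_j$ identified above, amalgamated across $j$ with coefficients drawn from an extremal sequence that witnesses non-compactness of $\mathrm{diag}(\sigma_j)$. This factors the problem through the classical fact that a diagonal operator $\mathrm{diag}(\sigma_j):\ell_{q_1}\to\ell_{q_2}$ is compact iff $(\sigma_j)\in\ell_{q^*}$ (respectively $\sigma_j\to 0$ when $q^*=\infty$).

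The main obstacle is the case split around the sign of $1/p_2-1/p_1$: when $p_1\le p_2$ the inner identity has norm one independent of $M_j$ and the $M_j$-dependence disappears, whereas for $p_2<p_1$ the factor $M_j^{1/p_2-1/p_1}$ is attained only by essentially constant vectors, so the lower-bound construction must simultaneously saturate the inner block and mimic the extremal diagonal sequence at the outer level. Once this two-level calibration is in place, both directions follow cleanly from the classical diagonal-operator criterion.
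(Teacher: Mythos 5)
Your outline is correct, but be aware that the paper does not prove this theorem at all: it is quoted from Leopold \cite{HL}, and the only ingredient reproduced in the paper is the tail estimate of Lemma~\ref{lem:1}, which is precisely the H\"older/truncation bound on which your sufficiency step rests, followed by approximation by the finite-rank operators $\sum_{j\le N}\id_j$. Your necessity argument --- blockwise extremizers (coordinate vectors for $p_1\le p_2$, constant vectors for $p_2<p_1$) amalgamated along an extremal sequence for the diagonal operator $\mathrm{diag}(\sigma_j):\ell_{q_1}\to\ell_{q_2}$ --- is the standard complement and matches the cited proof in essence; since you phrase it with test vectors (equivalently, an isometric copy of the diagonal operator inside the embedding) rather than with a bounded coefficient projection back onto $\ell_{q_2}$, it also goes through in the quasi-Banach range $p_2<1$, where no linear functional of norm one norms the constant vector.
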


We need also the following lemma which will be used in the proof of the next theorem.
\begin{lem}
Let $\big\{\beta_j^{-1}M_j^{({1}/{p_2}-{1}/{p_1})_+}\big\}_{j=0}^\infty\in \ell_{q^*}$ and
$$\id_j x= (\delta _{j,k}x_{k,l})_{k\in\N_0,l=1,\ldots,M_k}=\left\{ \begin{array}{ll} 0, & { \mathrm{if}\ k\neq j,}\\ x_{j,l}, & { \mathrm{if}\ k= j\ \mathrm{and}\  l=1,\ldots,M_j.}\end{array}\right.$$
Then
$$\biggl|\bigg|(\id-\sum_{j=0}^N\id_j)x|\ell_{q_2}(\ell_{p_2}^{M_j})\biggr|\bigg|\leq \big|\big|\big\{\beta_j^{-1}M_j^{({1}/{p_2}-{1}/{p_1})_+}\big\}_{j=N+1}^\infty|\ell_{q^\ast}\big|\big| \cdot||x|\ell_{q_1}(\beta_j\ell_{p_1}^{M_j})||.$$
\label{lem:1}
\end{lem}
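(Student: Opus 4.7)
The plan is to unpack the operator $\id-\sum_{j=0}^N\id_j$ at level $j$, turning the claim into a two-step Hölder-type estimate: first bound each finite-dimensional block $\ell_{p_1}^{M_j}\hookrightarrow\ell_{p_2}^{M_j}$ by a power of $M_j$, then bound the sum over $j$ by Hölder between $\ell_{q^*}$ and $\ell_{q_1}$.

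First I would observe that $(\id-\sum_{j=0}^N\id_j)x$ is the sequence which vanishes for $j\le N$ and equals $x_{j,l}$ for $j>N$, so that
\begin{equation*}
\bigl\|(\id-\textstyle\sum_{j=0}^N\id_j)x\,|\,\ell_{q_2}(\ell_{p_2}^{M_j})\bigr\|
=\Bigl(\sum_{j=N+1}^\infty\bigl\|x_j\,|\,\ell_{p_2}^{M_j}\bigr\|^{q_2}\Bigr)^{1/q_2},
\end{equation*}
where $x_j=(x_{j,1},\ldots,x_{j,M_j})$. The $p$-step is then the standard finite-dimensional embedding: by Hölder's inequality with the exponent pair $p_1/p_2>1$ (when $p_1>p_2$) or by monotonicity of $\ell_p$-quasinorms (when $p_1\le p_2$), one has
\begin{equation*}
\bigl\|x_j\,|\,\ell_{p_2}^{M_j}\bigr\|\le M_j^{(1/p_2-1/p_1)_+}\bigl\|x_j\,|\,\ell_{p_1}^{M_j}\bigr\|.
\end{equation*}

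With the abbreviation $a_j=\beta_j^{-1}M_j^{(1/p_2-1/p_1)_+}$ and $b_j=\beta_j\|x_j|\ell_{p_1}^{M_j}\|$, this gives $\|x_j|\ell_{p_2}^{M_j}\|\le a_j b_j$, and so the tail norm is bounded by $\|(a_jb_j)_{j>N}|\ell_{q_2}\|$. The $q$-step is then Hölder's inequality in the form $\|a\cdot b|\ell_{q_2}\|\le\|a|\ell_{q^*}\|\,\|b|\ell_{q_1}\|$, with $1/q_2=1/q^*+1/q_1$. Since $\|b|\ell_{q_1}\|\le\|x|\ell_{q_1}(\beta_j\ell_{p_1}^{M_j})\|$ and the $a$-factor is precisely the tail $\|\{a_j\}_{j=N+1}^\infty|\ell_{q^*}\|$, the lemma follows.

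The only subtlety is checking the three regimes hidden in the definition of $q^*$. When $0<q_2<q_1<\infty$, Hölder applies in the classical form $1/q_2=(q_1-q_2)/(q_1q_2)+1/q_1$. When $q_1=\infty$ (so $q^*=q_2$), the estimate degenerates into $\|ab|\ell_{q_2}\|\le\|a|\ell_{q_2}\|\,\|b|\ell_\infty\|$, a pointwise bound. When $q_1\le q_2$ (so $q^*=\infty$), one uses $\|ab|\ell_{q_2}\|\le\|a|\ell_\infty\|\,\|ab|\ell_{q_2}\|/\|a|\ell_\infty\|\le\|a|\ell_\infty\|\,\|b|\ell_{q_2}\|\le\|a|\ell_\infty\|\,\|b|\ell_{q_1}\|$, invoking the embedding $\ell_{q_1}\hookrightarrow\ell_{q_2}$. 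All three possibilities fit the single bound stated in the lemma, so no case distinction appears in the final inequality; this uniform packaging is the only minor obstacle, and the quasi-Banach nature of the spaces (when some $p_i,q_i<1$) causes no trouble here, as the argument never invokes a triangle inequality.
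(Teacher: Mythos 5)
Your proof is correct: the block-tail identification, the elementary bound $\|x_j|\ell_{p_2}^{M_j}\|\le M_j^{(1/p_2-1/p_1)_+}\|x_j|\ell_{p_1}^{M_j}\|$, and the H\"older/monotonicity step in $q$ (with the three regimes of $q^*$) together give exactly the stated inequality, and none of these steps needs a triangle inequality, so the quasi-Banach range is indeed harmless. The paper does not reprove this lemma but only cites \cite{HL}, and your argument is essentially the standard one given there, so there is nothing to add.
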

\noindent
The proof can be found in \cite{HL}.

Now we consider the asymptotic behavior of the  Weyl, the Gelfand numbers and the approximation numbers for these embeddings. We start with the estimates of the Weyl numbers.
\begin{thm}
\label{q0.2}
Let  $0< p_1, p_2\leq\infty, 0<q_1, q_2\leq\infty$ and $0<b<\infty$, $\delta>b(\frac{1}{p_2}-\frac{1}{p_1})_+$. Suppose $\lambda={(\frac 1{p_2}-\frac  1{p_1})/(1-\frac 2{p_1})}$,  $M_j\sim 2^{jb}$ and that the operator $\id: \ell_{q_1}(2^{j\delta}\ell_{p_1}^{M_j})\rightarrow \ell_{q_2}(\ell_{p_2}^{M_j})$ is compact.
\begin{enumerate}[(i)]
\item If $0< p_1\leq \max(2,p_2)\leq\infty$  or $0< p_2\le 2< p_1\le\infty$, then
$$
x_k(\id: \ell_{q_1}(2^{j\delta}\ell_{p_1}^{M_j})\rightarrow \ell_{q_2}(\ell_{p_2}^{M_j}))\sim k^{-\beta},$$
where
$$
\beta= \left\{ \begin{array}{ll} \frac{\delta}{b}+\frac{1}{p_1} - \frac{1}{p_2}, & {\text{if}\quad 0< p_1,p_2 \leq 2,} \\ \frac{\delta}{b},& {\text{if}\quad 2\leq p_1\leq p_2\leq\infty,} \\ \frac{\delta}{b}+\frac{1}{p_1}-\frac{1}{2},&{\text{if}\quad 0< p_1\leq 2\leq p_2\leq\infty,} \\ \frac{p_1}{2}(\frac{\delta}{b}+\frac{1}{p_1}-\frac{1}{p_2}), &{\text{if}\quad 0< p_2\le 2< p_1\le\infty\ \text{and}\ \delta<\frac{b}{p_2},}\\
 \frac{\delta}{b}+\frac{1}{2}-\frac{1}{p_2}, &{\text{if}\quad 0< p_2\le 2< p_1\le\infty\ \text{and}\  \delta>\frac{b}{p_2}.}\end{array}\right.
$$
\item If $2< p_2<p_1\le\infty$ and $\delta> b\lambda$, then
$$x_k({\rm id}:\ \ell_{q_1}(2^{j\delta}\ell ^{M_j}_{p_1})\rightarrow \ell_{q_2}(\ell ^{M_j}_{p_2}))\sim k^{-\frac{\delta}b}.$$

\item If $2< p_2<p_1\le\infty$ and $\delta< b\lambda$, then
$$c k^{-\frac{\delta}b}\le x_k({\rm id}:\ \ell_{q_1}(2^{j\delta}\ell ^{M_j}_{p_1})\rightarrow \ell_{q_2}(\ell ^{M_j}_{p_2}))\le C k^{-\frac{p_1}2(\frac{\delta}b+\frac 1{p_1}-\frac 1{p_2})}.$$
\end{enumerate}
\end{thm}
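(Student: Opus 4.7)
The plan is to reduce the infinite dimensional embedding to finite dimensional blocks via the partial sum operators $\id_j$ introduced in Lemma~\ref{lem:1}, and then apply the known Weyl number estimates for $\tilde\id_j:\ell_{p_1}^{M_j}\to\ell_{p_2}^{M_j}$ collected in Lemmas~\ref{xn1}--\ref{xn5} and Propositions~\ref{xn6}--\ref{xn7}. The key observation is that the restriction of the full embedding to the $j$-th block is, up to identification, the operator $2^{-j\delta}\,\tilde\id_j$, so $x_k(\id_j)=2^{-j\delta}\,x_k(\tilde\id_j)$. Moreover, the target $\ell_{q_2}(\ell_{p_2}^{M_j})$ is a $t$-Banach space for some $t=t(p_2,q_2)\in(0,1]$, so the $t$-additivity of the Weyl numbers from Proposition~\ref{prop}.2 is available.

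For the upper bounds I would fix a cut-off $N$ and decompose
$$\id=\sum_{j=0}^{N}\id_j+R_N,\qquad R_N=\id-\sum_{j=0}^{N}\id_j,$$
apply $t$-additivity with a rank allocation $k=k_0+\sum_{j=0}^N(k_j-1)$, control $\|R_N\|$ via Lemma~\ref{lem:1} (which together with $M_j\sim 2^{jb}$ yields decay of order $2^{-N(\delta-b(1/p_2-1/p_1)_+)}$), and bound each $x_{k_j}(\id_j)=2^{-j\delta}\,x_{k_j}(\tilde\id_j)$ by the appropriate finite dimensional estimate. Depending on the position of $(p_1,p_2)$ relative to $2$, one uses Lemma~\ref{xn1} (the three regimes with either $p_1,p_2\le 2$, or $2\le p_1\le p_2$, or $p_1\le 2\le p_2$), Proposition~\ref{xn6} (for $p_2\le 2<p_1$), or Proposition~\ref{xn7} (for $2<p_2<p_1$). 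The allocation is then optimized: the $k_j$ are chosen of geometric type in $j$ and $N$ is tuned so that $2^{Nb}\sim k$, which produces the exponents $\beta$ announced in (i) and (ii), and the upper bound in (iii).

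For the lower bounds I would exploit that every single block embedding factors through $\id$: the inclusion $\ell_{p_1}^{M_{j_0}}\hookrightarrow\ell_{q_1}(2^{j\delta}\ell_{p_1}^{M_j})$ supported on level $j_0$ picks up the factor $2^{-j_0\delta}$, and followed by $\id$ and the projection onto the $j_0$-th block it gives
$$2^{-j_0\delta}\,x_k(\tilde\id_{j_0})\le x_k(\id).$$
In the first three cases of (i) the lower bound comes from Lemma~\ref{xn1} with the choice $M_{j_0}\sim k$, i.e.\ $2^{j_0 b}\sim k$; in the fourth and fifth cases it comes from Lemma~\ref{xn8}, where the two alternatives $k\le M_{j_0}/2$ and $k\le M_{j_0}^{2/p_1}$ are responsible, after optimizing $j_0$, for the two different exponents that are separated by the threshold $\delta=b/p_2$. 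For (ii) and (iii) the relevant lower bound is the constant bound of Lemma~\ref{xn4}, which yields $x_k(\id)\ge c\,2^{-j_0\delta}\sim c\,k^{-\delta/b}$ after choosing $2^{j_0 b}\sim k$.

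The main technical obstacle lies in the mixed regime $0<p_2\le 2<p_1\le\infty$ of part (i) and its analogue in (iii): the finite dimensional Weyl numbers of $\tilde\id_j$ exhibit two different regimes separated by $k\sim M_j^{2/p_1}$, so the optimal rank allocation depends on which side of this threshold each block sits, and matching the upper and lower bounds requires that the extremizing block $j_0$ in the lower bound fall in the sharp regime of Proposition~\ref{xn6} or Proposition~\ref{xn7}. This is precisely what forces the case split at $\delta=b/p_2$ in (i), and in (iii) it leaves the unbridged gap between $k^{-\delta/b}$ and $k^{-(p_1/2)(\delta/b+1/p_1-1/p_2)}$, reflecting the still open problem of determining the exact asymptotics of $x_k(\id:\ell_{p_1}^N\to\ell_{p_2}^N)$ in the range $2<p_2<p_1$ away from $k\sim N^{2/p_1}$.
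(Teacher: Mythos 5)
Your reduction to the block operators $\id_j$, the use of $\rho$-additivity, the finite-dimensional Weyl estimates, and the lower-bound factorizations through a single block (Lemmas \ref{xn1}, \ref{xn8}, \ref{xn4}) coincide with the paper's argument, and the lower bounds in your sketch are sound in substance. The gap is in the upper bounds: you truncate at a single level $N$ chosen so that $2^{Nb}\sim k$, distribute the whole rank budget geometrically over $j\le N$, and control the remainder $R_N$ only through its norm, which by Lemma \ref{lem:1} is of order $2^{-N(\delta-b(1/p_2-1/p_1)_+)}\sim k^{-(\delta/b-(1/p_2-1/p_1)_+)}$. In several of the asserted cases this exponent is strictly smaller than $\beta$, so with your choice the tail term dominates and the stated rate cannot be reached: for $0<p_1<2\le p_2$ you only get $k^{-\delta/b}$ instead of $k^{-(\delta/b+1/p_1-1/2)}$; for $0<p_2\le 2<p_1$ you get $k^{-(\delta/b+1/p_1-1/p_2)}$, which is weaker than both $k^{-\frac{p_1}{2}(\delta/b+1/p_1-1/p_2)}$ and $k^{-(\delta/b+1/2-1/p_2)}$; and the same failure occurs in parts (ii) and (iii). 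Moreover, in the regimes governed by Propositions \ref{xn6} and \ref{xn7} the calibration $2^{Nb}\sim k$ is itself off: there the correct rank scale is $k\sim M_L^{2/p_1}\sim 2^{2Lb/p_1}$.

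What is missing is the two-scale construction used in the paper: fix a main level $L$ by $k\sim 2^{Lb}$ (resp.\ $k\sim 2^{2Lb/p_1}$), allocate geometric ranks $k_j=[M_j2^{(L-j)\epsilon}]$ (resp.\ $[M_j^{2/p_1}2^{(L-j)\epsilon}]$) only for $j\le L$, and on the intermediate blocks $L<j\le N$ keep small but nonzero ranks $k_j=\max\{[M_L(j-L)^{-2}],1\}$ (resp.\ with $M_L^{2/p_1}$). Their total is still $O(M_L)$ (resp.\ $O(M_L^{2/p_1})$), so the rank budget is unchanged, and their Weyl-number contribution sums to the correct order precisely because $\delta>b(\frac1{p_2}-\frac1{p_1})_+$. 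Since $N$ is thereby decoupled from $L$, it can finally be taken so large that $E_N$ falls below the intended bound (in the paper, $E_N\le c\,2^{-L\delta}M_L^{t}$ with $t$ as in (\ref{t}), and analogously in the other cases), at no cost in rank. Without this intermediate allocation, or an equivalent device, the upper estimates in the cases listed above -- and hence the matching asymptotics of the theorem -- do not follow from your argument.
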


\begin{proof}

{\em Step 1.} \textit{Estimation from below.} We consider the commutative diagram
\begin{displaymath}
\xymatrix{
\ell_{p_1}^{M_j}\ar[r]^{S^j}\ar[d]_{\id^{(j)}} &
\ell_{q_1}(2^{j\delta}\ell_{p_1}^{M_j}) \ar[d]^{\id} \\
\ell_{p_2}^{M_j}& \ell_{q_2}(\ell_{p_2}^{M_j}), \ar[l]_{T_j}}
\end{displaymath}
 where $\id^{(j)}$ denotes the embedding from $\ell_{p_1}^{M_j}$ in $\ell_{p_2}^{M_j}$.
The operator  $T_j $ is a projection, such that
\begin{displaymath}
T_j x=\{\delta_{j,k} x_{k,l}\}_{l=1}^{M_j},
 \end{displaymath}
 and  $S^j$ is the natural embedding which maps $\{x_l\}_{l=1}^{M_j}$ to the $j$-th block in  $\ell_{q_1}(2^{j\delta}\ell_{p_1}^{M_j})$,
 \begin{displaymath}
 S^j(\{x_l\}_{l=1}^{M_j})= (\hat{x}_{v,l}),\ \text{where}
 \end{displaymath}
 \begin{displaymath}
\hat{x}_{v,l}= \left\{ \begin{array}{ll} 0, & {\text{if}\ v\neq j,} \\ x_l,& {\text{if}\ v=j\ \text{and}\ 1\leq l\leq M_j.}  \end{array}\right.
\end{displaymath}
Then
\begin{displaymath}
||S^j:\ell_{p_1}^{M_j}\rightarrow \ell_{q_1}(2^{j\delta}\ell_{p_1}^{M_j}) ||=2^{j\delta},\quad || T_j: \ell_{q_2}(\ell_{p_2}^{M_j}) \rightarrow \ell_{p_2}^{M_j}||=1
\end{displaymath}
and
\begin{displaymath}
\id^{(j)}=T_j\circ \id\circ S^{j}.
\end{displaymath}
Consequently
\begin{equation*}
x_k(\id^{(j)}:\ell_{p_1}^{M_j}\rightarrow \ell_{p_2}^{M_j})\leq 2^{j\delta} x_k(\id:\ell_{q_1}(2^{j\delta}\ell_{p_1}^{M_j})\rightarrow \ell_{q_2}(\ell_{p_2}^{M_j})).
\end{equation*}

Let $0< p_1\leq \max(2,p_2)\leq\infty$.
By (\ref{eq:ali3}) with $k=[M_j/2]$, we obtain
$$
x_k(\id)\geq C \left\{ \begin{array}{ll} k^{-({\delta}/{b}+{1}/{p_1} - {1}/{p_2})}, & {\text{if}\quad 0<p_1,p_2 \leq 2,} \\ k^{-{\delta}/{b}},& {\text{if}\quad 2\leq p_1\leq p_2\leq\infty,} \\ k^{-({\delta}/{b}+{1}/{p_1}-{1}/{2})},&{\text{if}\quad 0< p_1\leq 2\leq p_2\leq\infty.}  \end{array}\right.
$$

Let $0< p_2\le 2< p_1\le\infty.$\\
If $\delta<\frac{b}{p_2}$ then by  (\ref{eq:ali5})  with $k=[M_j^{{2}/{p_1}}]$, we obtain
$$
C k^{-{(p_1/2)}({\delta}/{b}+{1}/{p_1}-{1}/{p_2})}\leq C 2^{-j\delta}2^{jb({1}/{p_2}-{1}/{p_1})}\leq C 2^{-j\delta}M_j^{{1}/{p_2}-{1}/{p_1}}\leq x_{k}(\id).
$$
If $\delta>\frac{b}{p_2}$ then  the formula (\ref{eq:ali5}) with $k=[M_j/2]$, yields
$$
C k^{-({\delta}/{b}+{1}/{2}-{1}/{p_2})}\leq C 2^{-j\delta}M_j^{{1}/{p_2}-{1}/{2}}\leq x_{k}(\id).
$$

Let $2< p_2<p_1\le\infty$.
By (\ref{low2p2p1}) with $k=[M_j/4]$, we obtain

$$C k^{-\delta/b}\le C 2^{-j\delta}\le x_k({\rm id}).$$

{\em Step 2.} \textit{Estimation from above.} Let $\id_j$ be defined as in Lemma \ref{lem:1}. Then we have
\begin{equation*}
\biggl|\bigg|(\id-\sum_{j=0}^N\id_j)x|\ell_{q_2}(\ell_{p_2}^{M_j})\biggr|\bigg|\leq E_N\left|\left|x|\ell_{q_1}(2^{j\delta}\ell_{p_1}^{M_j})\right|\right|,
\end{equation*}
where
\begin{equation*}
E_N=\bigg|\bigg|\{2^{-j\delta}M_j^{({1 }/{p_2}-{1}/{p_1})_+}\}_{j=N+1}^{\infty}|\ell_{q*}\bigg|\bigg|.
\end{equation*}
Let $\rho=\min(1,p_2,q_2)$, then $l_{q_2}(l_{p_2}^{M_j})$ is a $\rho$-Banach space. Therefore from the properties of the Weyl numbers, we get
\begin{equation}
x_k^\rho(\id:\ell_{q_1}(2^{j\delta}\ell_{p_1}^{M_j})\rightarrow \ell_{q_2}(\ell_{p_2}^{M_j}))\leq E_N^\rho + \sum_{j=0}^Lx_{k_j}^\rho(\id_j)+ \sum_{j=L+1}^Nx_{k_j}^\rho(\id_j),
\label{q5}
\end{equation}
where
\begin{equation}
k=\sum_{j=0}^Nk_j-(N+1).
\label{*}
\end{equation}
We will choose $N$ later.\\
{\em Substep 2.1.} We consider the commutative diagram
\begin{displaymath}
\xymatrix{
\ell_{q_1}(2^{j\delta}\ell_{p_1}^{M_j}) \ar[r]^{\quad T_j}\ar[d]_{\id_j} &
\ell_{p_1}^{M_j} \ar[d]^{\id^{(j)}} \\
\ell_{q_2}(\ell_{p_2}^{M_j}) & \ell_{p_2}^{M_j},\ar[l]_{\quad S^j}}
\end{displaymath}
where $T_j$ and $S_j$ are  defined  similarly to $T_j$ and $S_j$  in Step 1. Now
\begin{displaymath}
||T_j:\ell_{q_1}(2^{j\delta}\ell_{p_1}^{M_j})\rightarrow \ell_{p_j}^{M_j}||=2^{-j\delta}\quad\text{and}\quad  ||S^j:\ell_{p_2}^{M_j}\rightarrow \ell_{q_2}(\ell_{p_2}^{M_j})||=1.
\end{displaymath}
Therefore $\id_j=S^j\circ \id^{(j)}\circ T_j$ and
\begin{equation}
x_{k_j}(\id_j)\leq 2^{-j\delta}x_{k_j}(\id^{(j)}:\ell_{p_1}^{M_j}\rightarrow \ell_{p_2}^{M_j}).
\label{q6}
\end{equation}
Consequently we get from  (\ref{q5})
\begin{multline*}
x_k^\rho(\id:\ell_{q_1}(2^{j\delta}\ell_{p_1}^{M_j})\rightarrow \ell_{q_2}(\ell_{p_2}^{M_j}))\\ \leq E_N^\rho + \sum_{j=0}^L2^{-j\delta\rho} x_{k_j}^\rho(\id^{(j)})+ \sum_{j=L+1}^N2^{-j\delta\rho}x_{k_j}^\rho(\id^{(j)}),
\end{multline*}
with arbitrary $L, N$ and $k_j$ satisfying (\ref{*}).\\
{\em Substep 2.2.} Now we estimate the sum $\sum_{j=0}^L x_{k_j}^\rho(\id_j)$.

Let $0< p_1\leq \max(2,p_2)\leq\infty$. First we show that there exists a constant $C>0$ independent of $N$ and $k$, such that
\begin{equation}
x_k(\id:\ell_{p_1}^N\rightarrow \ell_{p_2}^N)\leq C \left\{ \begin{array}{ll} k^{{1}/{p_2}-{1}/{p_1}}, & {\text{if}\quad 0\leq p_1\leq p_2 \leq 2,} \\ 1,& {\text{if}\quad 2\leq p_1\leq p_2\leq\infty,} \\ k^{{1}/{2}-{1}/{p_1}},&{\text{if}\quad 0< p_1\leq 2\leq p_2\leq\infty,} \\ N^{{1}/{p_2}-{1}/{p_1}},& {\text{if}\quad 0<	 p_2\leq p_1 \leq 2,}
\end{array}\right.
\label{eq:qali5}
\end{equation}
for $k\leq N$.
We consider the commutative diagram
\begin{displaymath}
\xymatrix{
\ell_{p_1}^N \ar[r]^{S}\ar[d]_{\id} &
\ell_{p_1}^{2N} \ar[d]^{\Id} \\
\ell_{p_2}^N & \ell_{p_2}^{2N},\ar[l]_{T}}
\end{displaymath}
where
$$S(\lambda_1,\ldots,\lambda_N)=(\lambda_1,\ldots,\lambda_N,0,\ldots,0)\quad\text{and}$$
$$T(\lambda_1,\ldots,\lambda_{2N})=(\lambda_1,\ldots,\lambda_N).$$
Both norms $||S||$ and $||T||$ are equal to 1. Therefore $x_k(\id)\leq x_k(\Id)$, and then by (\ref{eq:ali3}) we get (\ref{eq:qali5}).

Let
\begin{equation*}
k_j=[M_j2^{(L-j)\epsilon}]\quad\text{for}\quad j=0,1,\ldots,L,
\end{equation*}
with  $0<\epsilon<b$. Then
\begin{equation}
\sum_{j=0}^L k_j\leq \sum_{j=0}^L [c2^{jb}2^{(L-j)\epsilon}]\leq  [c2^{Lb}] \sum_{j=0}^L 2^{(j-L)(b-\epsilon)}\leq [c2^{Lb}].
\label{27c}
\end{equation}
We put
\begin{equation}
t=\left\{ \begin{array}{ll} {1}/{p_2} - {1}/{p_1}, & {\text{if}\quad 0< p_1, p_2 \leq 2,} \\ 0,& {\text{if}\quad 2\leq p_1\leq p_2\leq\infty,} \\ {1}/{2}-{1}/{p_1},&{\text{if}\quad 0< p_1\leq 2\leq p_2\leq\infty.} \end{array}\right.
\label{t}
\end{equation}
 So by  (\ref{q6}) and (\ref{eq:qali5}) we get
\begin{equation}
\label{11}
x_{k_j}(\id_j)\leq C 2^{-j\delta}M_j^t2^{(L-j)\epsilon t},
\end{equation}
if $k_j\leq M_j$ and
\begin{equation*}
x_{k_j}(\id_j)= 0,
\end{equation*}
if $k_j> M_j$.
Moreover if  $j<L-\frac{1}{\epsilon}$, then $k_j=[M_j2^{(L-j)\epsilon}]>M_j$ and then $x_{k_j}(\id_j)= 0$. Therefore  by (\ref{11}) we have: \\
for $t< 0$
\begin{multline}
\sum_{j=0}^L x_{k_j}^\rho(\id_j)=\sum_{j=\left\lceil L-{1}/{\epsilon}\right\rceil}^{L} x_{k_j}^\rho(\id_j)\\ \leq c (2^{-L\delta}2^{Lbt})^\rho \sum_{j=\left\lceil L-{1}/{\epsilon}\right\rceil}^{L}(2^{(L-j)\delta}2^{-(L-j)bt}2^{(L-j)\epsilon t})^\rho\\ \leq c 2^{-Lb\rho({\delta}/{b}-t)}\sum_{j=\left\lceil L-{1}/{\epsilon}\right\rceil}^{L}(2^{{\delta}/{\epsilon}}2^{-{bt}/{\epsilon}}2^{(L-j)\epsilon t})^\rho
\leq C 2^{-Lb\rho({\delta}/{b}-t)};
\label{q12}
\end{multline}
for $t=0$
\begin{multline}
\sum_{j=0}^L x_{k_j}^\rho(\id_j)=\sum_{j=\left\lceil L-{1}/{\epsilon}\right\rceil}^{L} x_{k_j}^\rho(\id_j)\leq c 2^{-L\rho\delta} \sum_{j=\left\lceil L-{1}/{\epsilon}\right\rceil}^{L}2^{(L-j)\rho\delta} \\ \leq c 2^{-L\rho\delta}\sum_{j=\left\lceil L-{1}/{\epsilon}\right\rceil}^{L}2^{\rho{\delta}/{\epsilon}} \leq c 2^{-L\rho\delta}\frac{1}{\epsilon}2^{\rho{\delta}/{\epsilon}}
\leq C 2^{-L\rho\delta};
\label{q12c}
\end{multline}
for $t>0$
\begin{multline}
\hspace{-0,2cm}\sum_{j=0}^L x_{k_j}^\rho(\id_j)=\sum_{j=\left\lceil L-{1}/{\epsilon}\right\rceil}^{L} x_{k_j}^\rho(\id_j)\leq c (2^{-L\delta}2^{Lbt})^\rho \sum_{j=\left\lceil L-{1}/{\epsilon}\right\rceil}^{L}(2^{(L-j)\delta}2^{-(L-j)bt})^\rho\\ \leq c 2^{-Lb\rho({\delta}/{b}-t)}\sum_{j=\left\lceil L-{1}/{\epsilon}\right\rceil}^{L}(2^{{\delta}/{\epsilon}}2^{-(L-j)bt})^\rho
\leq C 2^{-Lb\rho({\delta}/{b}-t)};
\label{q12b}
\end{multline}
where $\left\lceil x\right\rceil=\inf\{k\in\N: k\geq x\}$ and constant $C$ is dependent on $\epsilon, \delta, t$ but not on $L$.

Let $0< p_2\le 2< p_1\le\infty$.\\
If $\frac{\delta}{b}<\frac{1}{p_2}$ then we take
\begin{equation*}
k_j= \big[M_j^{{2}/{p_1}}2^{(L-j)\epsilon}\big]\quad\text{for}\quad j=0,1,\ldots,L,
\end{equation*}
with $\epsilon>0$. Since $\frac{\delta}{b}<\frac{1}{p_2}$ we can choose $\epsilon>0$, such that $\frac{\delta}{b}+\frac{1}{p_1}-\frac{1}{p_2}<\frac{\epsilon}{2b}<\frac{1}{p_1}$. So we have
\begin{equation}
\sum_{j=0}^L k_j\leq \sum_{j=0}^L [c 2^{jb({2}/{p_1})}2^{(L-j)\epsilon}]\leq  [c 2^{Lb({2}/{p_1})}] \sum_{j=0}^L 2^{b(j-L)({2}/{p_1}-{\epsilon}/{b})}\leq [c 2^{Lb({2}/{p_1})}].
\label{q27}
\end{equation}
Now by (\ref{q6}) and (\ref{eq:ali6})  we get
\begin{multline}
\label{q28}
x_{k_j}(\id_j)\leq C 2^{-j\delta}M_j^{{1}/{p_2}-{1}/{p_1}} 2^{-(L-j)(\epsilon/2)} \\ \leq C 2^{-L\delta}2^{Lb({1}/{p_2}-{1}/{p_1})}2^{(L-j)\delta}2^{(L-j)b({1}/{p_1}-{1}/{p_2})}2^{-(L-j)(\epsilon/2)},
\end{multline}
since $M_j^{{2}/{p_1}}\leq k_j$. Hence
\begin{multline}
\label{q29}
\sum_{j=0}^L x_{k_j}^\rho(\id_j)\leq c 2^{-Lb\rho({\delta}/{b} +{1}/{p_1}-{1}/{p_2})} \sum_{j=0}^L2^{b\rho(L-j)({\delta}/{b}+{1}/{p_1}-{1}/{p_2}-{\epsilon}/{(2b)})}\\ \leq C 2^{-Lb\rho({\delta}/{b} +{1}/{p_1}-{1}/{p_2})},
\end{multline}
with constant $C$ independent of $L$.\\
If $\frac{\delta}{b}>\frac{1}{p_2}$ then we take
\begin{equation}
k_j= [M_j2^{(L-j)\epsilon}]\quad\text{for}\quad j=0,1,\ldots,L,
\label{q32}
\end{equation}
with a fixed $\epsilon$, $0<\epsilon<b$. Hence
\begin{equation}
\sum_{j=0}^L k_j\leq \sum_{j=0}^L [c 2^{jb}2^{(L-j)\epsilon}]\leq  [c 2^{Lb}] \sum_{j=0}^L 2^{(j-L)(b-\epsilon)}\leq [c 2^{Lb}].
\label{q27b}
\end{equation}
If $j<L- \frac{1}{\epsilon}$ then $k_j>M_j$. Therefore from (\ref{q6}) and (\ref{eq:ali6})  we get
\begin{multline*}
\sum_{j=0}^Lx_{k_j}^\rho(\id_j)=\sum_{j=\left\lceil L-{1}/{\epsilon}\right\rceil}^{L}x_{k_j}^\rho(\id_j)\\ \leq c (2^{-L\delta}2^{Lb({1}/{p_2}-{1}/{2})})^\rho\sum_{j=\left\lceil L-{1}/{\epsilon}\right\rceil}^{L}(2^{(L-j)\delta}2^{-(L-j)b({1}/{p_2}-{1}/{2})}2^{-(L-j)(\epsilon/2)})^\rho.
\end{multline*}
Here
\begin{multline*}
\sum_{j=\left\lceil L-{1}/{\epsilon}\right\rceil}^{L}(2^{(L-j)\delta}2^{-(L-j)b({1}/{p_2}-{1}/{2})}2^{-(L-j)(\epsilon/2)})^\rho\\
\leq 2^{\rho{\delta}/{\epsilon}}\sum_{j=\left\lceil L-{1}/{\epsilon}\right\rceil}^{L}(2^{-(L-j)b({1}/{p_2}-{1}/{2})}2^{-(L-j)(\epsilon/2)})^\rho\leq C.
\end{multline*}
So \begin{equation}
\sum_{j=0}^Lx_{k_j}^\rho(\id_j)\leq C 2^{-Lb\rho(\delta/b + {1}/{2}-{1}/{p_2})}.
\label{qc6}
\end{equation}
Let $2< p_2<p_1\le\infty$.\\
If $\delta< b\lambda$ then we take
$$k_j=[M_j^{2/p_1}2^{(L-j)\varepsilon}]\ \ {\rm for}\ \ j=0, 1,\ldots, L,$$
with $\varepsilon>0$. Since $\delta< b\lambda$ we can choose $\varepsilon>0$, such that $\frac 1\lambda(\frac\delta b+\frac 1{p_1}-\frac 1{p_2})<\frac{\varepsilon}b<\frac 2{p_1}$. We should mention that the equality $\frac 1\lambda(\frac\delta b+\frac 1{p_1}-\frac 1{p_2})=\frac 2{p_1}$ holds true if and only if $\delta= b\lambda$. Then
\begin{equation}\label{kLle}\sum^L_{j=0}k_j\le [c2^{Lb(2/p_1)}].\end{equation}
By means of (\ref{wn2p2p1}) and (\ref{q6}), we have

$$
x_{k_j}({\rm id}_j)\le C 2^{-j\delta}M_j^{\lambda(1-2/p_1)}2^{-(L-j)\varepsilon\lambda} \leq C 2^{-Lb(\frac{\delta}b+\frac1{p_1}-\frac1{p_2})}2^{b(L-j)(\frac{\delta}b+\frac 1{p_1}-\frac 1{p_2}-\frac{\varepsilon\lambda}b)}.
$$

Therefore,
\begin{equation}\label{xkLle}\sum_{j=0}^{L}x_{k_j}^\rho({\rm id}_j)\le C 2^{-Lb\rho (\frac{\delta}b+\frac 1{p_1}-\frac 1{p_2})},
\end{equation}
with constant $C$ independent of $L$.\\
If $\delta> b\lambda$ then we take
$$k_j=[M_j2^{(L-j)\varepsilon}]\ \ {\rm for}\ \ j=0, 1,\ldots, L,$$
with fixed $\varepsilon,\ 0<\varepsilon<b$. Then $\delta> \lambda\varepsilon$ and
\begin{equation}\label{kLge}\sum^L_{j=0}k_j\le [c2^{Lb}].\end{equation}
If $j<L-\frac 1\varepsilon$, then $k_j>M_j$. In view of (\ref{upper21}) and (\ref{q6}), we have
\begin{multline}\label{xkLge}
\sum^L_{j=0}x_{k_j}^\rho({\rm id}_j) =\sum^L_{j=\lceil L-1/\varepsilon\rceil}x_{k_j}^\rho({\rm id}_j)
 \le C_1\sum^L_{j=\lceil L-1/\varepsilon\rceil}(2^{-j\delta}2^{-(L-j)\lambda\varepsilon})^\rho
\\
 \le C_1 2^{-L\delta\rho}\sum^L_{j=\lceil L-1/\varepsilon\rceil}(2^{(L-j)(\delta-\lambda\varepsilon)})^\rho
\le C_2 2^{-L\delta\rho}2^{(\delta-\lambda\varepsilon)\rho/\varepsilon}\le C_3 2^{-L\delta\rho}.
\end{multline}

{\em Substep 2.3.} Now we estimate the sum $\sum_{j=L+1}^N x_{k_j}^\rho(\id_j)$ and choose $N$ such that $E_N$ is small enough.

Consider the first case $0< p_1\leq \max(2,p_2)\leq\infty$.
We show that
\begin{equation}
\sum_{j=L+1}^N x_{k_j}^\rho(\id_j)\leq C 2^{-Lb\rho({\delta}/{b}-t)},
\label{q15}
\end{equation}
where $t$ is defined in (\ref{t}).
Let
$$k_j=\max\big\{\big[M_L(j-L)^{-2}\big], 1\big\}\quad\text{for}\quad j=L+1,\ldots,N.$$
Then
\begin{equation}
\sum_{j=L+1}^N  k_j\leq c M_L + (N-L).
\label{q22}
\end{equation}
If  $t\leq 0$ then $k_j^t\leq (M_L(j-L)^{-2})^t$. Therefore from   (\ref{q6}) and (\ref{eq:qali5}) we get
\begin{equation}
\sum_{j=L+1}^N x_{k_j}^\rho(\id_j)\leq c( 2^{-L\delta}2^{Lbt})^\rho\sum_{j=L+1}^N (2^{(L-j)\delta}(j-L)^{-2t})^\rho\leq C 2^{-Lb\rho({\delta}/{b}-t)}.
\label{qc3}
\end{equation}
If $t>0$ and  since $\delta>b(\frac{1}{p_2}-\frac{1}{p_1})_+=\frac{b}{t}$ then from  (\ref{q6}) and (\ref{eq:qali5})  we obtain
\begin{equation}
\sum_{j=L+1}^N x_{k_j}^\rho(\id_j)\leq c (2^{-L\delta}2^{Lbt})^\rho\sum_{j=L+1}^N(2^{(L-j)\delta}2^{-(L-j)bt})^\rho\leq C 2^{-Lb\rho({\delta}/{b}-t)},
\label{qc4}
\end{equation}
where $C$ is independent of $L$ and $N$.

 Now we can choose $N$ such that  $E_N\leq c2^{-L\delta}M_L^t$. This is possible because
$\{E_N\}_{N=1}^\infty$ is a decreasing sequence such that $\lim_{N\rightarrow\infty}E_N=0.$
The formulas (\ref{*}), (\ref{27c}) and (\ref{q22}) imply that $k\leq[cM_L]$.  So by (\ref{q5}), (\ref{q12}), (\ref{q12c}), (\ref{q12b}), (\ref{qc3}) and (\ref{qc4}) we have
$x_{[c M_L]}(\id)\leq C 2^{-Lb({\delta}/{b}-t)}.$ Finally
$$x_{k}(\id)\leq C k^{-({\delta}/{b}-t)}, \quad \text{for all}\ k\in\N.$$

Consider the next case $0< p_2\le 2< p_1\le\infty$.\\
If $\frac{\delta}{b}<\frac{1}{p_2}$ then we take
$$k_j=\max\big\{\big[M_L^{{2}/{p_1}}(j-L)^{-2}\big],1\big\} \quad\text{for}\quad j=L+1,\ldots,N.$$
Hence
\begin{equation}
\sum_{j=L+1}^N  k_j\leq c M_L^{{2}/{p_1}} + (N-L).
\label{**}
\end{equation}
 Now by (\ref{eq:ali6}), (\ref{q6}) and since $\delta>b(\frac{1}{p_2}-\frac{1}{p_1})_+$ we have
\begin{multline}
\sum_{j=L+1}^N x_{k_j}^\rho(\id_j)\leq C  (2^{-L\delta}2^{Lb({1}/{p_2}-{1}/{p_1})})^\rho\sum_{j=L+1}^N(2^{(L-j)\delta}2^{(L-j)b({1}/{p_1}-{1}/{p_2})})^\rho\\  \leq c 2^{-Lb\rho({\delta}/{b}+{1}/{p_1}-{1}/{p_2})}\sum_{j=L+1}^N2^{b\rho(L-j)({\delta}/{b}+{1}/{p_1}-{1}/{p_2})}\leq c 2^{-Lb\rho({\delta}/{b}+{1}/{p_1}-{1}/{p_2})}.
\hspace{-0,2cm}\label{qc5}
\end{multline}
As in the previous case we can choose $N$ such that $E_N\leq c 2^{-L\delta}{M_L}^{{1}/{p_2}-{1}/{p_1}}.$
The formulas (\ref{*}), (\ref{q27}) and (\ref{**}) imply  $k\leq [c M_L^{2/p_1}]$. So by (\ref{q5}), (\ref{q29}) and (\ref{qc5}) we have
$x_{[c M_L^{2/p_1}]}(\id)\leq C 2^{-Lb({\delta}/{b}+{1}/{p_1}-{1}/{p_2})}.$
Finally
$$x_{k}(\id)\leq C k^{-({p_1}/{2})({\delta}/{b}+{1}/{p_1}-{1}/{p_2})}, \quad \text{for all}\ k\in\N.$$
But if we assume  $\frac{\delta}{b}>\frac{1}{p_2}$, then taking
 $$k_j=\max\big\{[M_L(j-L)^{-2}],1\big\}\quad\text{for}\quad j=L+1,\ldots,N,$$
we get
\begin{equation}
\sum_{j=L+1}^N  k_j\leq c M_L + (N-L).
\label{***}
\end{equation}
 We notice that if $1\leq k_j\leq M_j^{{2}/{p_1}}$ then $M_j^{{1}/{p_2}-{1}/{p_1}}\leq M_j^{{1}/{p_2}}k_j^{-{1}/{2}}$. So by   (\ref{eq:ali6}) and (\ref{q6}) we have
\begin{multline}
\sum_{j=L+1}^N x_{k_j}^\rho(\id_j)\leq \sum_{j=L+1}^N c ( 2^{-j\delta}2^{(bj)/{p_2}}2^{-(bL)/{2}}(j-L))^\rho\\ \leq c 2^{-Lb\rho({\delta}/{b}-{1}/{p_2}+{1}/{2})}\sum_{j=L+1}^N (2^{b(L-j)({\delta}/{b}-{1}/{p_2})}(j-L))^\rho \\ \leq C 2^{-Lb\rho({\delta}/{b}-{1}/{p_2}+{1}/{2})}.
\label{qc7}
\end{multline}
Now we choose $N$ such that $E_N\leq c 2^{-L\delta}{M_L}^{{1}/{p_2}-{1}/{2}}$.
From (\ref{*}), (\ref{q27b}), (\ref{***}) we have $k\leq [cM_L]$.
So by (\ref{q5}), (\ref{qc6}), (\ref{qc7}) we get
$x_{[cM_L]}(\id)\leq C 2^{-Lb({\delta}/{b}+{1}/{2}-{1}/{p_2})}.$  Then
$$x_{k}(\id)\leq C k^{-({\delta}/{b}+{1}/{2}-{1}/{p_2})}, \quad \text{for all}\ k\in\N.$$

Consider the last case  $2< p_2<p_1\le\infty$.\\
If $\delta< b\lambda$ then we take
$$k_j=\max\{[M_L^{2/p_1}(j-L)^{-2}],1\}\ \ {\rm for}\ \ j=L+1,\ldots, N.$$
Hence, $k_j\le M_L^{2/p_1}\le M_j^{2/p_1}$ and
\begin{equation}\label{kNle}\sum^N_{j=L+1}k_j\le c M_L^{2/p_1}+(N-L).\end{equation}
Based on (\ref{wn2p2p1}), (\ref{q6}) and the previous assumption $\delta>b(\frac 1{p_2}-\frac 1{p_1})_+$,  we have
\begin{equation}\label{xkNle}\sum^N_{j=L+1}x_{k_j}^\rho({\rm id}_j)\le C \sum^N_{j=L+1}(2^{-j\delta}2^{jb(\frac 1{p_2}-\frac 1{p_1})})^\rho\le C 2^{-Lb\rho(\frac{\delta}b+\frac 1{p_1}-\frac 1{p_2})}.
\end{equation}
We choose $N$ such that $E_N\le c2^{-L\delta}M_L^{\frac 1{p_2}-\frac 1{p_1}}$. The formulas (\ref{kLle}) and (\ref{kNle}) imply that $k=\sum_{j=0}^Nk_j-(N+1)\le [cM_L^{2/p_1}]$. Finally, by
(\ref{q5}), (\ref{xkLle}) and (\ref{xkNle}), we have
$$x_k({\rm id})\le Ck^{-\frac{p_1}2(\frac{\delta}b+\frac 1{p_1}-\frac 1{p_2})}, \ \ \ {\rm for\ all}\ k\in\mathbb{N}.$$
If $\delta> b\lambda$ then we take
$$k_j=\max\{[M_L(j-L)^{-2}],1\}\ \ {\rm for}\ \ j=L+1,\ldots, N,$$
and we get
\begin{equation}\label{kNge}
\sum^N_{j=L+1}k_j\le cM_L+(N+L).
\end{equation}
By (\ref{upper21}) and (\ref{q6}), we have
\begin{equation}\label{xkNge}
\begin{split}
\sum^N_{j=L+1}x_{k_j}^\rho({\rm id}_j)
&\le C \sum^N_{j=L+1}(2^{-j\delta}2^{jb\lambda}2^{-Lb\lambda}(j-L)^{2\lambda})^\rho\\
&\le C 2^{-L\rho\delta}\sum^N_{j=L+1}2^{\rho(L-j)(\delta-b\lambda)}(j-L)^{2\rho\lambda}
\le C 2^{-L\rho\delta}.
\end{split}
\end{equation}
We choose $N$ such that $E_N\le c2^{-L\delta}$. The formulas (\ref{kLge}) and (\ref{kNge}) yield that $k=\sum_{j=0}^Nk_j-(N+1)\le [cM_L]$. Then, by (\ref{q5}),
(\ref{xkLge}) and (\ref{xkNge}), we have
$$x_k({\rm id})\le Ck^{-\frac{\delta}b}, \ \ \ {\rm for\ all}\ k\in\mathbb{N}.$$

\end{proof}

\begin{rmk} Still, there are minor gaps left open in the estimates for the case $\delta<b\lambda$. It should be mentioned that, for the bounds of the asymptotic order, the inequality $\frac\delta b=\frac{p_1}2(\frac\delta b+\frac 1{p_1}-\frac 1{p_2})$ holds if and only if $\delta= b\lambda$.
Furthermore, the problem becomes much more complicated if $\delta=b\lambda$. How do the Weyl numbers behave in such limiting situation? Different
from those previous cases, the corresponding behaviour herein maybe depend on the parameters
$q_1$ and $q_2$ to a certain extent.
\end{rmk}

The asymptotic behavior of the approximation, the Gelfand and the Kolmogorov numbers can be found in \cite[Theorem 3.5, 4.12 and 4.6]{JV}. Since a bit different notation is used there we recall here the results using our symbols. Some partial results were proved earlier in \cite{Cae}.
\begin{thm}
\label{q0.3}
Let\ ~$0< p_1,p_2\leq\infty,\ 0<q_1, q_2\leq\infty$ and $0<b<\infty$, $\delta>b(\frac{1}{p_2}-\frac{1}{p_1})_+$. Suppose $M_j\sim 2^{jb}$,
$\frac 1p=\frac{1}{\min(p_1',p_2)}$ and  that the operator $\id:~\ell_{q_1}(2^{j\delta}\ell_{p_1}^{M_j})\rightarrow \ell_{q_2}(\ell_{p_2}^{M_j})$ is compact.
Then
$$
a_k(\id: \ell_{q_1}(2^{j\delta}\ell_{p_1}^{M_j})\rightarrow \ell_{q_2}(\ell_{p_2}^{M_j}))\sim k^{-\beta},
$$
where
\begin{equation}
\beta= \left\{ \begin{array}{ll}  \frac{\delta}{b}, &{\text{if}\quad 0< p_1\leq p_2\leq 2\ \text{or}\ 2\leq p_1\leq p_2\leq\infty,}\\ \frac{p}{2}\frac{\delta}{b}, &{\text{if}\quad 0< p_1<2< p_2< \infty\  \text{and}\ \delta<\frac{b}{p}\ \text{or}}\\ & {\hspace{+0,6cm}  1< p_1<2< p_2= \infty\  \text{and}\ \delta<\frac{b}{p},}\\
 \frac{\delta}{b}+\frac{1}{2}-\frac{1}{p}, &{\text{if}\quad 0< p_1<2< p_2\leq \infty\ \text{and}\ \delta>\frac{b}{p},}\\ \frac{\delta}{b}+\frac{1}{p_1}-\frac{1}{p_2}, &{\text{if}\quad 0< p_2\leq p_1\leq \infty,}\\
 \frac{\delta}{b}+\frac{1}{2}-\frac{1}{p_2}, &{\text{if}\quad 0< p_1\leq 1< p_2= \infty.}
 \end{array}\right.
 \label{qc9}
\end{equation}
\end{thm}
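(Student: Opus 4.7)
The plan is to recognize that Theorem \ref{q0.3} is a reformulation, in our notation, of known results of Vybíral \cite[Theorem 3.5, 4.12 and 4.6]{JV} (with partial predecessors in Caetano \cite{Cae}). The principal task is then to set up a precise dictionary: match the weight $\{2^{j\delta}\}$ and the block sizes $\{M_j\}\sim\{2^{jb}\}$ with the parameters used in \cite{JV}, verify the convention on $p=\min(p_1',p_2)$ (in particular the appearance of the conjugate exponent $p_1'$, which only shows up in the mixed regime $p_1<2<p_2$), and then read off each case of (\ref{qc9}) from the corresponding regime in \cite{JV}. Once the translation is checked, no further argument is needed.

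If one preferred an independent proof, the strategy would mirror the proof of Theorem \ref{q0.2}. For the lower bound, I would factor the finite-dimensional embedding $\id^{(j)}\colon\ell_{p_1}^{M_j}\to\ell_{p_2}^{M_j}$ as $T_j\circ\id\circ S^j$ with $\|S^j\|=2^{j\delta}$ and $\|T_j\|=1$, yielding
\[
a_k(\id^{(j)}\colon\ell_{p_1}^{M_j}\to\ell_{p_2}^{M_j})\le 2^{j\delta}\,a_k(\id\colon\ell_{q_1}(2^{j\delta}\ell_{p_1}^{M_j})\to\ell_{q_2}(\ell_{p_2}^{M_j})),
\]
and substituting the classical asymptotics of Kashin--Gluskin--Pietsch for $a_k(\id\colon\ell_{p_1}^N\to\ell_{p_2}^N)$ at a suitable choice $k\sim M_j$ or $k\sim M_j^{2/p}$ produces the matching lower bound in each regime of (\ref{qc9}).

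For the upper bound, I would use the same truncation technique as in Substeps 2.1--2.3 of the proof of Theorem \ref{q0.2}: write $\id=\sum_{j=0}^N\id_j+(\id-\sum_{j=0}^N\id_j)$, control the tail by $E_N$ via Lemma \ref{lem:1}, and apply $\rho$-additivity (with $\rho=\min(1,p_2,q_2)$) to obtain
\[
a_k^{\rho}(\id)\le E_N^{\rho}+\sum_{j=0}^{L}a_{k_j}^{\rho}(\id_j)+\sum_{j=L+1}^{N}a_{k_j}^{\rho}(\id_j),
\]
subject to $k=\sum_{j=0}^{N}k_j-(N+1)$. Each $\id_j$ factors as $S^j\circ\id^{(j)}\circ T_j$ with $\|T_j\|=2^{-j\delta}$ and $\|S^j\|=1$, transferring the finite-dimensional upper bounds block by block; natural choices are $k_j=[M_j^{2/p}2^{(L-j)\varepsilon}]$ for low frequencies and $k_j=\max\{[M_L^{2/p}(j-L)^{-2}],1\}$ for high frequencies (or the analogous choices with $M_j$ in place of $M_j^{2/p}$), with $N$ selected so that $E_N$ is comparable to the dominant block term.

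The main obstacle is the proliferation of parameter regions: five different asymptotic orders appear in (\ref{qc9}), each tied to a different classical finite-dimensional estimate (with type/cotype effects in the mixed regime $p_1<2<p_2$, where $p=\min(p_1',p_2)$ enters for the first time). Extra care is required at the transition $\delta=b/p$ between the two intermediate subcases and at the endpoint $p_2=\infty$ with $p_1\le 1$, which is isolated as the last line of (\ref{qc9}). These case analyses make the bookkeeping lengthy, but each regime is by now standard, which is why the theorem is stated by quotation rather than reproved here.
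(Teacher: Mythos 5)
Your proposal matches the paper exactly: Theorem \ref{q0.3} is not proved in the paper but is quoted from \cite[Theorem 3.5]{JV} (with partial precursors in \cite{Cae}) and merely restated in the paper's notation, which is precisely your primary plan of setting up the dictionary between the weight $2^{j\delta}$, the block sizes $M_j\sim 2^{jb}$, and the parameter $p=\min(p_1',p_2)$. Your sketched independent argument along the lines of the proof of Theorem \ref{q0.2} is a reasonable extra, but the paper itself relies solely on the citation, so you are in agreement with its approach.
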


\begin{thm}
\label{q0.4}
Let $0< p_1,p_2\leq\infty, 0<q_1, q_2\leq\infty$ and $0<b<\infty$, $\delta>b(\frac{1}{p_2}-\frac{1}{p_1})_+$. Suppose $M_j\sim 2^{jb}$, $\theta=\big({\frac{1}{p_1}-\frac{1}{p_2}}\big)/\big({\frac{1}{p_1}-\frac{1}{2}}\big)$ and that the operator $\id: \ell_{q_1}(2^{j\delta}\ell_{p_1}^{M_j})\rightarrow \ell_{q_2}(\ell_{p_2}^{M_j})$ is compact.
Then

$$c_k(\id:  \ell_{q_1}(2^{j\delta}\ell_{p_1}^{M_j})\rightarrow \ell_{q_2}(\ell_{p_2}^{M_j}))\sim k^{-\beta},
$$
where
\begin{equation*}
\beta=\left\{ \begin{array}{ll} \frac{\delta}{b}, &{\text{if}\quad 2\leq p_1<p_2\leq\infty,}\\ \frac{\delta}{b} + \frac{1}{p_1}- \frac{1}{p_2}, &  {\text{if}\quad 0< p_2\leq p_1\leq\infty\   \text{or}} \\  &  {\hspace{+0,6cm} 0<p_1<p_2\leq2\ \text{and}\ \frac{\delta}{b}>\frac{\theta}{p_1'}, } \\
\frac{p_1'}{2}\frac{\delta}{b},&  {\text{if}\quad  1<p_1<p_2\leq2\ \text{and}\  \frac{\delta}{b}<\frac{\theta}{p_1'}\ \text{or}}  \\ & {\hspace{+0,6cm} 1< p_1< 2 < p_2 \leq\infty\ \text{and}\   \frac{\delta}{b}<\frac{1}{p_1'}, }\\
 \frac{\delta}{b} +\frac{1}{p_1}- \frac{1}{2}, & {\text{if}\quad 0< p_1< 2 < p_2 \leq\infty\ \text{and}\  \frac{\delta}{b}>\frac{1}{p_1'}. } \end{array}\right.
\end{equation*}
\end{thm}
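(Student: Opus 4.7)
The plan is to imitate the three-step scheme used to prove Theorem \ref{q0.2}, with the Weyl numbers everywhere replaced by the Gelfand numbers, and with Lemmas \ref{xn1}--\ref{xn5} and Propositions \ref{xn6}--\ref{xn7} replaced by the sharp estimates for $c_k(\id:\ell_{p_1}^N\to\ell_{p_2}^N)$. In the range $0<p_2\le p_1\le\infty$ the finite-dimensional Gelfand numbers are given exactly by Lemma~\ref{gn}; for $p_1<p_2$ one uses the classical two-zone bounds available in \cite{JV,Pie78,Ko86}, whose breakpoint lies near $k\sim N^{2/p_1'}$ (when $p_1<2<p_2$) or near $k\sim N^{\theta}$ (when $p_1<p_2\le 2$).

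For the lower bound, multiplicativity of $s$-numbers applied to the commutative diagram of Step~1 of the proof of Theorem~\ref{q0.2}, in which $\id^{(j)}=T_j\circ\id\circ S^j$ with $\|S^j\|=2^{j\delta}$ and $\|T_j\|=1$, yields
\[
c_k(\id^{(j)}:\ell_{p_1}^{M_j}\to\ell_{p_2}^{M_j})\le 2^{j\delta}\,c_k(\id).
\]
Evaluating at $k=[M_j/2]\sim 2^{jb-1}$ or $k=[M_j^{2/p_1'}]$, depending on where the breakpoint of the finite-dimensional estimate sits relative to $M_j$, and substituting $M_j\sim 2^{jb}$, reproduces the claimed rate $k^{-\beta}$ in each sub-case of the theorem.

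For the upper bound I would employ the block decomposition of Substeps~2.1--2.3: with $\rho=\min(1,p_2,q_2)$, so that $\ell_{q_2}(\ell_{p_2}^{M_j})$ is a $\rho$-Banach space, additivity of the Gelfand numbers together with Lemma~\ref{lem:1} gives
\[
c_k^\rho(\id)\le E_N^\rho+\sum_{j=0}^L c_{k_j}^\rho(\id_j)+\sum_{j=L+1}^N c_{k_j}^\rho(\id_j),\qquad k=\sum_{j=0}^N k_j-(N+1).
\]
Each block is controlled by the dual factorization $\id_j=S^j\circ\id^{(j)}\circ T_j$ with $\|T_j\|=2^{-j\delta}$ and $\|S^j\|=1$, which reduces it to the finite-dimensional Gelfand number. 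The choices $k_j=[M_j\,2^{(L-j)\varepsilon}]$, or $k_j=[M_j^{2/p_1'}2^{(L-j)\varepsilon}]$ in the finer cases, for $j\le L$, and $k_j=\max\{[M_L(j-L)^{-2}],1\}$ for $j>L$, together with $N$ picked so that $E_N\lesssim 2^{-L\delta}M_L^{t}$ for the appropriate exponent $t$, then yield $c_k(\id)\le Ck^{-\beta}$.

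The main obstacle will lie in the two transition regimes $\delta/b=\theta/p_1'$ (within $0<p_1<p_2\le 2$) and $\delta/b=1/p_1'$ (within $0<p_1<2<p_2\le\infty$), where the two-zone behavior of the finite-dimensional Gelfand numbers must be aligned with the block structure. Below the threshold the flat zone $k_j\le M_j^{2/p_1'}$ of the blocks dominates, producing the exponent $(p_1'/2)(\delta/b)$; above it the decay zone gives the additive exponent $\delta/b+1/p_1-1/2$, or $\delta/b+1/p_1-1/p_2$ in the $p_2\le 2$ range. Verifying that the interior sum is geometric and that the tail is of the same order requires exactly the standing hypothesis $\delta>b(1/p_2-1/p_1)_+$ together with a careful choice of $\varepsilon$ near the relevant threshold, in full parallel with the Weyl case of Theorem~\ref{q0.2}.
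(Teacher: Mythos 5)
Your strategy is sound, but be aware that the paper does not prove Theorem \ref{q0.4} at all: together with Theorems \ref{q0.3} and \ref{q0.5} it is simply quoted from Vyb\'iral \cite[Theorem 4.12]{JV}, restated in the present notation, and only the Weyl-number case, Theorem \ref{q0.2}, is proved in the paper. So your proposal takes a genuinely different route from the paper's (a direct proof rather than a citation), although in substance it is the route of \cite{JV} and of Theorem \ref{q0.2}: the same splitting into blocks $\id_j$, the factorizations $\id^{(j)}=T_j\circ\id\circ S^j$ and $\id_j=S^j\circ\id^{(j)}\circ T_j$, and the $\rho$-triangle inequality, with finite-dimensional Weyl estimates replaced by finite-dimensional Gelfand estimates. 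What the citation buys the paper is exactly the ingredient your sketch treats too lightly: for $p_1<p_2$ the sharp two-zone bounds for $c_k(\id:\ell_{p_1}^N\rightarrow\ell_{p_2}^N)$ are Gluskin-type theorems (reproduced in \cite{JV}), not available in \cite{Pie78,Ko86}, and the paper's own Lemma \ref{gn} covers only $p_2\le p_1$. Moreover, for $1<p_1<p_2\le 2$ the breakpoint sits at $k\sim N^{2/p_1'}$, not at $N^{\theta}$ --- the parameter $\theta$ enters as the exponent in $c_k\sim\min\{1,N^{1/p_1'}k^{-1/2}\}^{\theta}$; the case $p_1\le 1$ carries logarithmic factors (Garnaev--Gluskin) that must be absorbed into the geometric block sums; and the convergence of the tail sums $\sum_{j>L}$ in the above-threshold sub-cases requires the case hypotheses $\frac{\delta}{b}>\frac{\theta}{p_1'}$ resp.\ $\frac{\delta}{b}>\frac{1}{p_1'}$, not merely the standing assumption $\delta>b(\frac{1}{p_2}-\frac{1}{p_1})_+$ (which is vacuous when $p_1<p_2$). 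With those points repaired, your block scheme does reproduce the stated exponents; its advantage over the paper's treatment is a self-contained argument fully parallel to the proof of Theorem \ref{q0.2}, at the price of importing the nontrivial finite-dimensional Gelfand estimates.
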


\begin{thm}
\label{q0.5}
Let $0< p_1,p_2\leq\infty, 0<q_1, q_2\leq\infty$ and $0<b<\infty$, $\delta>b(\frac{1}{p_2}-\frac{1}{p_1})_+$. Suppose $M_j\sim 2^{jb}$, $\theta'=\big({\frac{1}{p_1}-\frac{1}{p_2}}\big)/\big({\frac{1}{2}-\frac{1}{p_2}}\big)$ and that the operator $\id: \ell_{q_1}(2^{j\delta}\ell_{p_1}^{M_j})\rightarrow \ell_{q_2}(\ell_{p_2}^{M_j})$ is compact.
Then

$$d_k(\id:  \ell_{q_1}(2^{j\delta}\ell_{p_1}^{M_j})\rightarrow \ell_{q_2}(\ell_{p_2}^{M_j}))\sim k^{-\beta},
$$
where
\begin{equation*}
\beta=\left\{ \begin{array}{ll}  \frac{\delta}{b}, &{\text{if}\quad  0< p_1\leq p_2\leq 2,}\\ \frac{\delta}{b} +  \frac{1}{2}-\frac{1}{p_2}, &{\text{if}\quad 0< p_1<2< p_2\leq\infty\ \text{and}\ \frac{\delta}{b}>\frac{1}{p_2},}\\  \frac{p_2}{2}\frac{\delta}{b}, &{\text{if}\quad 0< p_1<2< p_2<\infty\ \text{and}\ \frac{\delta}{b}<\frac{1}{p_2}\ \text{or}} \\ &\hspace{+0,6cm}{2< p_1\leq p_2\leq \infty\  \text{and}\ \frac{\delta}{b}<\frac{\theta' }{p_2},}\\ \frac{\delta}{b}+\frac{1}{p_1}-\frac{1}{p_2}, &{\text{if}\quad 2< p_1\leq p_2\leq \infty\  \text{and}\ \frac{\delta}{b}>\frac{\theta' }{p_2}\ \text{or}}\\ &\hspace{+0,6cm} {0< p_2\leq p_1 \leq\infty.} \end{array}\right.
\end{equation*}
\end{thm}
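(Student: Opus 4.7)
The plan is to mirror the two-step scheme used in the proof of Theorem~\ref{q0.2} for Weyl numbers, replacing the finite-dimensional Weyl estimates by the corresponding finite-dimensional Kolmogorov estimates $d_k(\id:\ell_{p_1}^N\to\ell_{p_2}^N)$. These classical estimates, due to Gluskin, Kashin and Garnaev--Gluskin, take three shapes: $d_k\sim(N-k+1)^{1/p_2-1/p_1}$ when $0<p_2\le p_1\le\infty$; $d_k\sim 1$ for $k\le N/2$ when $0<p_1\le p_2\le 2$; and a two-regime Kashin-type behaviour when $0<p_1<2<p_2\le\infty$ or, via the Gelfand--Kolmogorov duality in the Banach range, when $2<p_1\le p_2\le\infty$, with the transition scales $k\sim N^{2/p_2}$ and $k\sim N^{\theta'/p_2}$ respectively. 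These transitions are exactly what produces the two thresholds $\delta/b=1/p_2$ and $\delta/b=\theta'/p_2$ in the statement.

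For the lower bound I would reuse, verbatim, the factorisation
\begin{displaymath}
\xymatrix{
\ell_{p_1}^{M_j}\ar[r]^{S^j}\ar[d]_{\id^{(j)}} & \ell_{q_1}(2^{j\delta}\ell_{p_1}^{M_j})\ar[d]^{\id}\\
\ell_{p_2}^{M_j} & \ell_{q_2}(\ell_{p_2}^{M_j})\ar[l]_{T_j},
}
\end{displaymath}
from Step~1 of the proof of Theorem~\ref{q0.2}. The identity $\id^{(j)}=T_j\circ\id\circ S^j$ with $\|S^j\|=2^{j\delta}$, $\|T_j\|=1$, combined with multiplicativity, yields $d_k(\id^{(j)})\le 2^{j\delta}\,d_k(\id)$. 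Substituting $k\sim M_j\sim 2^{jb}$ in the post-Kashin regime and $k\sim M_j^{2/p_2}$ (respectively $k\sim M_j^{\theta'/p_2}$) in the Kashin-plateau regime reproduces each of the five lower bounds stated.

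For the upper bound I would repeat the decomposition
\begin{equation*}
\id=\Big(\id-\sum_{j=0}^N\id_j\Big)+\sum_{j=0}^L\id_j+\sum_{j=L+1}^N\id_j,
\end{equation*}
controlling the first summand by Lemma~\ref{lem:1} and bounding each block via the dual diagram, which gives $d_{k_j}(\id_j)\le 2^{-j\delta}\,d_{k_j}(\id^{(j)})$; additivity is used in the $\rho$-power with $\rho=\min(1,p_2,q_2)$ since $\ell_{q_2}(\ell_{p_2}^{M_j})$ is a $\rho$-Banach space. The choice of $k_j$ follows the template of Substeps~2.2--2.3: a geometric profile $k_j=[M_j 2^{(L-j)\epsilon}]$ (or $k_j=[M_j^{2/p_2}2^{(L-j)\epsilon}]$ in the Kashin sub-regime) for $j\le L$, and a polynomial profile $k_j=\max\{[M_L(j-L)^{-2}],1\}$ for $j>L$, with $N$ chosen so that $E_N\le c\,2^{-L\delta}M_L^{t}$ for the exponent $t$ coming from the relevant finite-dimensional Kolmogorov estimate.

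The main obstacle is the bookkeeping at the two thresholds $\delta/b=1/p_2$ and $\delta/b=\theta'/p_2$: at these points both the exponent $t$ and the natural scale for $k_j$ switch, so one has to verify on each side of each threshold that the geometric and polynomial sums remain summable and that the resulting constraint $k\le[cM_L]$ or $k\le[cM_L^{2/p_2}]$ matches the announced $\beta$. The case $0<p_2\le p_1\le\infty$ is considerably easier, since the finite-dimensional estimate has no Kashin splitting and the argument reduces to a single run of Substeps~2.2 and 2.3 with the one-regime exponent $t=1/p_2-1/p_1$.
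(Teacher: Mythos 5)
Your route is necessarily different from the paper's, because the paper does not prove Theorem~\ref{q0.5} at all: it is quoted from Vyb\'iral \cite[Theorem 4.6]{JV} (with only a change of notation), just as Theorems~\ref{q0.3} and \ref{q0.4} are quoted from \cite{JV}. What you propose is essentially to redo for $d_k$ what the paper does in detail for $x_k$ in Theorem~\ref{q0.2}, and that skeleton does transfer verbatim: the lower bound via $\id^{(j)}=T_j\circ\id\circ S^j$ and multiplicativity, the upper bound via Lemma~\ref{lem:1}, the $\rho$-triangle inequality (Proposition~\ref{prop} covers $d_k$), the block estimate $d_{k_j}(\id_j)\le 2^{-j\delta}d_{k_j}(\id^{(j)})$, and the two profiles of $k_j$ with the choice of $N$ through $E_N$. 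This is in fact how the cited source argues, so your plan supplies a self-contained proof of a statement the paper only imports; the only genuinely new ingredient is the list of finite-dimensional Kolmogorov estimates replacing Lemmas~\ref{xn1}--\ref{gn}.

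Two points need repair. First, your description of the finite-dimensional input in the range $2<p_1\le p_2\le\infty$ is off: the plateau $d_k(\id:\ell_{p_1}^N\to\ell_{p_2}^N)\sim 1$ ends at $k\sim N^{2/p_2}$ (the same scale as for $p_1<2<p_2$), and $\theta'$ enters only through the post-plateau decay $d_k\sim\bigl(N^{1/p_2}k^{-1/2}\bigr)^{\theta'}$, hence through the value $N^{-(1/p_1-1/p_2)}$ at $k\sim N$; the threshold $\frac{\delta}{b}=\frac{\theta'}{p_2}$ arises from comparing the plateau-edge bound with the $k\sim N$ bound, not from a transition at $k\sim N^{\theta'/p_2}$. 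If you literally substitute $k\sim M_j^{\theta'/p_2}$ in your lower-bound recipe you obtain the exponent $\frac{p_2}{\theta'}\frac{\delta}{b}$ instead of the stated $\frac{p_2}{2}\frac{\delta}{b}$ (and $\theta'\le 1$ here, so these differ); your upper-bound choice $k_j=[M_j^{2/p_2}2^{(L-j)\epsilon}]$ is the correct one, so this is an internal inconsistency to fix rather than a collapse of the method. Second, the finite-dimensional estimates are not uniformly of the clean polynomial shape you quote: the Gluskin/Kashin/Garnaev--Gluskin results are stated in the Banach range and carry logarithmic factors in endpoint cases (e.g.\ $p_1\le 1$ or $p_2=\infty$), and the quasi-Banach parameters $p_1<1$, $p_2<1$ need separate (easy but explicit) reductions; you must therefore quote the precise two-sided lemmas, as collected in \cite{JV}, and check that one-sided polynomial bounds suffice after the geometric summation over $j$, exactly as the paper does for Weyl numbers.
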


\section{The embeddings of function spaces on quasi-bounded domains}
In this part, we use the estimates from the previous section to obtain the asymptotic behavior of the $s$-numbers of the following embeddings
\begin{equation*}
\bar{A}^{s_1}_{p_1,q_1}(\Omega)\hookrightarrow\bar{A}^{s_2}_{p_2,q_2}(\Omega),
\end{equation*}
defined on the quasi-bounded domains. Theorem 3.23 in \cite{HT} guarantees  that
$$
s_k(\bar{A}^{s_1}_{p_1,q_1}(\Omega)\hookrightarrow\bar{A}^{s_2}_{p_2,q_2}(\Omega))\sim s_k(\ell_{q_1}(2^{j\delta}\ell_{p_1}^{M_j})\hookrightarrow \ell_{q_2}(\ell_{p_2}^{M_j})),
$$
for any  $s$-numbers, if $\Omega$ is the uniformly $E$-porous domain  described in  Definition \ref{Epor}. If domain $\Omega$ is quasi-bounded and the assumptions of  Theorem \ref{twr:1.4} are satisfied then the embedding is compact. Moreover if
\begin{equation}
0<\liminf_{j\rightarrow\infty}b_j(\Omega)2^{-jb(\Omega)}\leq \limsup_{j\rightarrow\infty}b_j(\Omega)2^{-jb(\Omega)}<\infty,
\label{b_j}
\end{equation}
then using (\ref{l_kostek}) we get
\begin{equation*}
M_j\sim 2^{jb(\Omega)}.
\end{equation*}

Using all the above remarks  and the well known elementary embeddings
$$
B_{p,q_1}^s(\R^d)\hookrightarrow F_{p,q}^s(\R^d)\hookrightarrow B_{p,q_2}^s(\R^d),
\quad\text{if}\ q_1\leq \min (p,q),\  q_2\geq \max(p,q),
$$
we get the following theorem.

\begin{thm}
\label{main}
Let $\Omega$ be the quasi-bounded domain satisfying (\ref{b_j}) and uniformly  $E$-porous  in $\R^d$ with $\Omega\neq\R^d$. Let $0< p_1,p_2\leq\infty, 0< q_1, q_2\leq\infty$, $b=b(\Omega)<\infty$, $\delta=s_1-s_2-d(\frac{1}{p_1}-\frac{1}{p_2})>\frac{b(\Omega)}{p^*}$, $\frac{1}{p^*}=\big(\frac{1}{p_2}-\frac{1}{p_1}\big)_{+}$, $\lambda={(\frac 1{p_2}-\frac  1{p_1})/(1-\frac 2{p_1})}$,  $\theta=\big({\frac{1}{p_1}-\frac{1}{p_2}}\big)/\big({\frac{1}{p_1}-\frac{1}{2}}\big)$, $\theta'=\big({\frac{1}{p_1}-\frac{1}{p_2}}\big)/\big({\frac{1}{2}-\frac{1}{p_2}}\big)$ and $\frac{1}{p}=\frac{1}{\min(p_1^{'},p_2)}$.
Then
\begin{enumerate}[(i)]
	\item
$$
 x_k(\bar{A}^{s_1}_{p_1,q_1}(\Omega)\hookrightarrow\bar{A}^{s_2}_{p_2,q_2}(\Omega))\sim k^{-\gamma},
$$

where
$$\hspace{-0.8cm}\gamma=\left\{ \begin{array}{ll}\frac{s_1-s_2}{b}+\frac{b-d}{b}(\frac{1}{p_1}-\frac{1}{p_2})
, & {\text{if}\quad 0< p_1, p_2 \leq 2,} \\ \frac{s_1-s_2}{b}-\frac{d}{b}(\frac{1}{p_1}-\frac{1}{p_2}),& {\text{if}\quad 2\leq p_1\leq p_2\leq\infty\ \text{or}}\\ & {\hspace{+0,57cm}  2< p_2<p_1\le\infty\  \text{and}\ \delta>b\lambda,} \\ \frac{s_1-s_2}{b}+\frac{b(\frac{1}{p_1}-\frac{1}{2})-d(\frac{1}{p_1}-\frac{1}{p_2})}{b},& {\text{if}\quad 0< p_1\leq 2\leq p_2\leq\infty,}\\
\frac{p_1}{2}\big(\frac{s_1-s_2}{b}+\frac{b-d}{b}(\frac{1}{p_1}-\frac{1}{p_2})\big), & {\text{if}\quad 0< p_2\le 2< p_1\le\infty\ \text{and}\  \frac{\delta}{b}<\frac{1}{p_2},}\\
 \frac{s_1-s_2}{b}+\frac{b(\frac{1}{2}-\frac{1}{p_2})-d(\frac{1}{p_1}-\frac{1}{p_2})}{b}, & {\text{if}\quad 0< p_2\le 2< p_1\le\infty\ \text{and}\ \frac{\delta}{b}>\frac{1}{p_2}.}\end{array}\right. $$
 \item If $2< p_2<p_1\le\infty$ and $\delta< b\lambda$, then
 \begin{multline*}
 c k^{-\frac{s_1-s_2}{b}+\frac{d}{b}(\frac{1}{p_1}-\frac{1}{p_2})}\leq x_k(\bar{A}^{s_1}_{p_1,q_1}(\Omega)\hookrightarrow\bar{A}^{s_2}_{p_2,q_2}(\Omega))\\ \leq C k^{-\frac{p_1}{2}\big(\frac{s_1-s_2}{b}+\frac{b-d}{b}(\frac{1}{p_1}-\frac{1}{p_2})\big) }.
\end{multline*}
 \item
 $$
 a_k(\bar{A}^{s_1}_{p_1,q_1}(\Omega)\hookrightarrow\bar{A}^{s_2}_{p_2,q_2}(\Omega))\sim k^{-\gamma},
$$
 where
$$\hspace{-0.6cm}\gamma=  \left\{ \begin{array}{ll} \frac{s_1-s_2}{b}-\frac{d}{b}(\frac{1}{p_1}-\frac{1}{p_2}), & {\text{if}\quad 0< p_1\leq p_2\leq 2\ \text{or}\ 2\leq p_1\leq p_2\leq\infty,} \\ \frac{p}{2}\big(\frac{s_1-s_2}{b}-\frac{d}{b}(\frac{1}{p_1}-\frac{1}{p_2})\big),&  {\text{if}\quad 0< p_1<2< p_2< \infty\ \text{and}\  \frac{\delta}{b}<\frac{1}{p}\ \text{or} }\\  & \hspace{+0.55cm}  {1< p_1<2< p_2= \infty\ \text{and}\  \frac{\delta}{b}<\frac{1}{p},}\\
\frac{s_1-s_2}{b}+\frac{b(\frac{1}{2}-\frac{1}{p})-d(\frac{1}{p_1}-\frac{1}{p_2})}{b},&  {\text{if}\quad  0< p_1<2< p_2\leq \infty\ \text{and}\  \frac{\delta}{b}>\frac{1}{p}, }\\  \frac{s_1-s_2}{b}+\frac{b-d}{b}(\frac{1}{p_1}-\frac{1}{p_2}),& {\text{if}\quad 0< p_2\leq p_1\leq \infty,}\\
  \frac{s_1-s_2}{b}+\frac{b(\frac{1}{2}-\frac{1}{p_2})-d(\frac{1}{p_1}-\frac{1}{p_2})}{b},&  {\text{if}\quad  0< p_1\leq 1< p_2= \infty.}\end{array}\right.
$$

 \item
 $$
 c_k(\bar{A}^{s_1}_{p_1,q_1}(\Omega)\hookrightarrow\bar{A}^{s_2}_{p_2,q_2}(\Omega))\sim k^{-\gamma},
$$
 where
$$\hspace{-0.2cm}\gamma=  \left\{ \begin{array}{ll} \frac{s_1-s_2}{b}-\frac{d}{b}(\frac{1}{p_1}-\frac{1}{p_2}), & {\text{if}\quad 2\leq p_1<p_2\leq\infty,}\\ \frac{s_1-s_2}{b}+\frac{b-d}{b}(\frac{1}{p_1}-\frac{1}{p_2}), & {\text{if}\quad 0< p_2\leq p_1 \leq\infty\  \text{or}} \\ & \hspace{+0.55cm}  {0<p_1<p_2\leq2\ \text{and}\  \frac{\delta}{b}>\frac{\theta}{p_1'}, } \\
\frac{p_1'}{2}\big(\frac{s_1-s_2}{b}-\frac{d}{b}(\frac{1}{p_1}-\frac{1}{p_2})\big),&  {\text{if}\quad  1<p_1<p_2\leq2\ \text{and}\  \frac{\delta}{b}<\frac{\theta}{p_1'}\ \text{or} }\\ & \hspace{+0,55cm} {1< p_1< 2 < p_2 \leq\infty\ \text{and}\ \frac{\delta}{b}<\frac{1}{p_1'},}\\
\frac{s_1-s_2}{b}+\frac{b(\frac{1}{p_1}-\frac{1}{2})-d(\frac{1}{p_1}-\frac{1}{p_2})}{b}, & {\text{if}\quad 0< p_1<2 < p_2 \leq\infty\ \text{and}\ \frac{\delta}{b}>\frac{1}{p_1'}. } \end{array}\right.
$$
 \item
  $$
 d_k(\bar{A}^{s_1}_{p_1,q_1}(\Omega)\hookrightarrow\bar{A}^{s_2}_{p_2,q_2}(\Omega))\sim k^{-\gamma},
$$
where
$$\hspace{-0.5cm}\gamma=\left\{ \begin{array}{ll}  \frac{s_1-s_2}{b}-\frac{d}{b}(\frac{1}{p_1}-\frac{1}{p_2}), &{\text{if}\quad  0< p_1\leq p_2\leq 2,}\\ \frac{s_1-s_2}{b} + \frac{b( \frac{1}{2}-\frac{1}{p_2})-d(\frac{1}{p_1}-\frac{1}{p_2})}{b}, &{\text{if}\quad 0< p_1<2< p_2\leq\infty\ \text{and}\ \frac{\delta}{b}>\frac{1}{p_2},}\\  \frac{p_2}{2}(\frac{s_1-s_2}{b}-\frac{d}{b}(\frac{1}{p_1}-\frac{1}{p_2})), &{\text{if}\quad 0< p_1<2< p_2<\infty\ \text{and}\ \frac{\delta}{b}<\frac{1}{p_2}\ \text{or}} \\ &\hspace{+0,6cm}{2< p_1\leq p_2\leq \infty\  \text{and}\ \frac{\delta}{b}<\frac{\theta' }{p_2},}\\ \frac{s_1-s_2}{b}+\frac{b-d}{b}(\frac{1}{p_1}-\frac{1}{p_2}), &{\text{if}\quad 2< p_1\leq p_2\leq \infty\  \text{and}\ \frac{\delta}{b}>\frac{\theta' }{p_2}\ \text{or}}\\ &\hspace{+0,6cm} {0< p_2\leq p_1 \leq\infty.} \end{array}\right.
$$
 \end{enumerate}
\label{twr5}
\end{thm}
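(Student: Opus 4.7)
The plan is to reduce everything to the sequence-space embeddings already analyzed in Theorems \ref{q0.2}--\ref{q0.5}. By Triebel's wavelet isomorphism (Theorem 3.23 in \cite{HT}) we have $\bar{B}_{p,q}^s(\Omega)\cong \ell_q(2^{j(s-d/p)}\ell_p^{M_j})$ for any uniformly $E$-porous $\Omega$, and $s$-numbers are invariant under isomorphism up to equivalence. Hence the $s$-numbers of $\bar{B}_{p_1,q_1}^{s_1}(\Omega)\hookrightarrow \bar{B}_{p_2,q_2}^{s_2}(\Omega)$ coincide (up to multiplicative constants) with those of the sequence-space map $\ell_{q_1}(2^{j(s_1-d/p_1)}\ell_{p_1}^{M_j})\hookrightarrow \ell_{q_2}(2^{j(s_2-d/p_2)}\ell_{p_2}^{M_j})$. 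Normalizing the target weight to $1$ by a diagonal isometry produces the operator $\id:\ell_{q_1}(2^{j\delta}\ell_{p_1}^{M_j})\to \ell_{q_2}(\ell_{p_2}^{M_j})$ with
\[
\delta=s_1-s_2-d\left(\tfrac{1}{p_1}-\tfrac{1}{p_2}\right),\qquad b=b(\Omega).
\]
Assumption (\ref{b_j}) yields $M_j\sim 2^{jb}$, Theorem \ref{twr:1.4} guarantees compactness once $\delta>b/p^*$, and the identity $1/p^*=(1/p_2-1/p_1)_+$ shows that the quantitative hypothesis $\delta>b(1/p_2-1/p_1)_+$ of Theorems \ref{q0.2}--\ref{q0.5} is automatically satisfied.

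Parts (i)--(ii) then follow directly from Theorem \ref{q0.2}, part (iii) from Theorem \ref{q0.3}, part (iv) from Theorem \ref{q0.4}, and part (v) from Theorem \ref{q0.5}. The remaining task is to rewrite each exponent and each threshold of those theorems in terms of $s_1,s_2,p_1,p_2,d,b$ by substituting $\delta/b=(s_1-s_2)/b-(d/b)(1/p_1-1/p_2)$. For instance, the first branch of (i) comes from
\[
\tfrac{\delta}{b}+\tfrac{1}{p_1}-\tfrac{1}{p_2}=\tfrac{s_1-s_2}{b}+\tfrac{b-d}{b}\left(\tfrac{1}{p_1}-\tfrac{1}{p_2}\right),
\]
and every other branch simplifies analogously. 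Thresholds such as $\delta<b\lambda$, $\delta/b>1/p_2$, $\delta/b\gtrless\theta/p_1'$, or $\delta/b\gtrless\theta'/p_2$ translate mechanically into the corresponding conditions in the statement.

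For the $F$-scale (and mixed $B/F$ cases) I would use the elementary embeddings
\[
B_{p,q_1}^s\hookrightarrow F_{p,q}^s\hookrightarrow B_{p,q_2}^s,\qquad q_1\leq\min(p,q),\ q_2\geq\max(p,q),
\]
to sandwich the $F$-embedding between two $B$-embeddings that have the same parameters $s_i,p_i,d,b$ but possibly different microscopic indices. Since none of the exponents $\gamma$ in (i)--(v) depends on $q_1,q_2$, the monotonicity and multiplicativity of $s$-numbers (Proposition \ref{prop}) transfer the $B$-estimates verbatim to the $F$-scale. The main difficulty is not analytic but bookkeeping: keeping the many piecewise branches straight and verifying the threshold rewrites in each case. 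The only genuinely open item is the gap in part (ii) for $2<p_2<p_1\leq\infty$ with $\delta<b\lambda$, which simply inherits the gap already present in Theorem \ref{q0.2}(iii) and therefore requires no further argument here.
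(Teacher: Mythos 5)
Your proposal matches the paper's own argument essentially verbatim: the paper likewise invokes Triebel's wavelet isomorphism (Theorem 3.23 in \cite{HT}) to identify the $s$-numbers of $\bar{A}^{s_1}_{p_1,q_1}(\Omega)\hookrightarrow\bar{A}^{s_2}_{p_2,q_2}(\Omega)$ with those of $\id:\ell_{q_1}(2^{j\delta}\ell_{p_1}^{M_j})\to\ell_{q_2}(\ell_{p_2}^{M_j})$, uses (\ref{b_j}) and (\ref{l_kostek}) to get $M_j\sim 2^{jb(\Omega)}$, applies Theorems \ref{q0.2}--\ref{q0.5} with $\delta=s_1-s_2-d(\frac1{p_1}-\frac1{p_2})$, and handles the $F$-scale through the elementary embeddings $B^s_{p,q_1}\hookrightarrow F^s_{p,q}\hookrightarrow B^s_{p,q_2}$ exactly as you describe. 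Your substitution $\frac{\delta}{b}=\frac{s_1-s_2}{b}-\frac{d}{b}(\frac1{p_1}-\frac1{p_2})$ and the observation that part (ii) inherits the gap of Theorem \ref{q0.2}(iii) are both consistent with the paper, so the proposal is correct and takes the same route.
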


The above formulas simplify if $\Omega$ is a domain of finite measure.
\begin{Corollary}
Let $\Omega$ be uniformly $E$-porous in $\R^d$ of finite Lebesgue measure and $\delta=s_1-s_2-d(\frac{1}{p_1}-\frac{1}{p_2})_+>0$. Let $\lambda$, $\theta$, $\theta'$ and $\frac{1}{p}$ denote the same as in Theorem \ref{main}. Then
\begin{enumerate}[(i)]
	\item
$$
 x_k(\bar{A}^{s_1}_{p_1,q_1}(\Omega)\hookrightarrow\bar{A}^{s_2}_{p_2,q_2}(\Omega))\sim k^{-\gamma},
$$

where
$$\gamma=\left\{ \begin{array}{ll}\frac{s_1-s_2}{d}
, & {\text{if}\quad 0< p_1, p_2 \leq 2,} \\ \frac{s_1-s_2}{d}-\frac{1}{p_1}+\frac{1}{p_2},& {\text{if}\quad 2\leq p_1\leq p_2\leq\infty\ \text{or}}\\ & {\hspace{+0,57cm}  2< p_2<p_1\le\infty\  \text{and}\ \delta>d\lambda,} \\ \frac{s_1-s_2}{d}-\frac{1}{2}+\frac{1}{p_2},&{\text{if}\quad 0<p_1\leq 2\leq p_2\leq\infty,}\\ \frac{p_1}{2}\frac{s_1-s_2}{d}, &{\text{if}\quad 0< p_2\le 2< p_1\le\infty\ \text{and}\  \frac{\delta}{d}<\frac{1}{p_2},}\\
 \frac{s_1-s_2}{d}+\frac{1}{2}-\frac{1}{p_1}, &{\text{if}\quad 0< p_2\le 2< p_1\le\infty\ \text{and}\ \frac{\delta}{d}>\frac{1}{p_2}.}\end{array}\right. $$
 \item
 If $2< p_2<p_1\le\infty$ and $\delta< d\lambda$, then
$$
 c k^{ -\frac{s_1-s_2}{d}+\frac{1}{p_1}-\frac{1}{p_2}}\leq x_k(\bar{A}^{s_1}_{p_1,q_1}(\Omega)\hookrightarrow\bar{A}^{s_2}_{p_2,q_2}(\Omega)) \leq C k^{- \frac{p_1}{2}\frac{s_1-s_2}{d} }.
$$
 \item
 $$
 a_k(\bar{A}^{s_1}_{p_1,q_1}(\Omega)\hookrightarrow\bar{A}^{s_2}_{p_2,q_2}(\Omega))\sim k^{-\gamma},
$$
 where
$$\hspace{-0.35cm}\gamma=  \left\{ \begin{array}{ll} \frac{s_1-s_2}{d}-\frac{1}{p_1}+\frac{1}{p_2}, & {\text{if}\quad 0< p_1\leq p_2\leq 2\ \text{or}\ 2\leq p_1\leq p_2\leq\infty,} \\ \frac{p}{2}(\frac{s_1-s_2}{d}-\frac{1}{p_1}+\frac{1}{p_2}),&  {\text{if}\quad 0< p_1<2< p_2< \infty\ \text{and}\  \frac{\delta}{d}<\frac{1}{p}\ \text{or} }\\ & \hspace{+0,6cm}{ 1< p_1<2< p_2= \infty\ \text{and}\  \frac{\delta}{d}<\frac{1}{p},} \\
\frac{s_1-s_2}{d}+\frac{1}{2}-\frac{1}{p}-\frac{1}{p_1}+\frac{1}{p_2},&  {\text{if}\quad  0<p_1<2< p_2\leq \infty\ \text{and}\  \frac{\delta}{d}>\frac{1}{p}, }\\  \frac{s_1-s_2}{d},& {\text{if}\quad 0< p_2\leq p_1\leq \infty,}\\ \frac{s_1-s_2}{d}+\frac{1}{2}-\frac{1}{p_1},&  {\text{if}\quad  0<p_1\leq 1< p_2= \infty.} \end{array}\right.
$$

 \item
 $$
 c_k(\bar{A}^{s_1}_{p_1,q_1}(\Omega)\hookrightarrow\bar{A}^{s_2}_{p_2,q_2}(\Omega))\sim k^{-\gamma},
$$
where
$$\gamma=  \left\{ \begin{array}{ll} \frac{s_1-s_2}{d}-\frac{1}{p_1}+\frac{1}{p_2}, & {\text{if}\quad 2\leq p_1<p_2\leq\infty,}\\ \frac{s_1-s_2}{d}, & {\text{if}\quad 0< p_2\leq p_1 \leq\infty\  \text{or}} \\ & \hspace{+0,6cm}  {0<p_1<p_2\leq2\ \text{and}\  \frac{\delta}{d}>\frac{\theta}{p_1'}, } \\
\frac{p_1'}{2}(\frac{s_1-s_2}{d}-\frac{1}{p_1}+\frac{1}{p_2}),&  {\text{if}\quad  1<p_1<p_2\leq2\ \text{and}\  \frac{\delta}{d}<\frac{\theta}{p_1'}\ \text{or} }\\ & \hspace{+0,6cm} {1< p_1< 2 < p_2 \leq\infty\ \text{and}\ \frac{\delta}{d}<\frac{1}{p_1'},}\\
\frac{s_1-s_2}{d}-\frac{1}{2}+\frac{1}{p_2}, & {\text{if}\quad 0< p_1< 2 < p_2 \leq\infty\ \text{and}\ \frac{\delta}{d}>\frac{1}{p_1'}. } \end{array}\right.
$$
 \item
  $$
 d_k(\bar{A}^{s_1}_{p_1,q_1}(\Omega)\hookrightarrow\bar{A}^{s_2}_{p_2,q_2}(\Omega))\sim k^{-\gamma},
$$
where
$$\gamma=\left\{ \begin{array}{ll}  \frac{s_1-s_2}{d}-\frac{1}{p_1}+\frac{1}{p_2}, &{\text{if}\quad  0< p_1\leq p_2\leq 2,}\\ \frac{s_1-s_2}{d} +  \frac{1}{2}-\frac{1}{p_1}, &{\text{if}\quad 0< p_1<2< p_2\leq\infty\ \text{and}\ \frac{\delta}{d}>\frac{1}{p_2},}\\  \frac{p_2}{2}(\frac{s_1-s_2}{d}-\frac{1}{p_1}+\frac{1}{p_2}), &{\text{if}\quad 0< p_1<2< p_2<\infty\ \text{and}\ \frac{\delta}{d}<\frac{1}{p_2}\ \text{or}} \\ &\hspace{+0.6cm}{2<p_1\leq p_2\leq \infty\  \text{and}\ \frac{\delta}{d}<\frac{\theta' }{p_2},}\\ \frac{s_1-s_2}{d}, &{\text{if}\quad 2< p_1\leq p_2\leq \infty\  \text{and}\ \frac{\delta}{d}>\frac{\theta' }{p_2}\ \text{or}}\\ &\hspace{+0,6cm} {0< p_2\leq p_1 \leq\infty.} \end{array}\right.
$$
 \end{enumerate}

\end{Corollary}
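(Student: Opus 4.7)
The plan is to derive the Corollary as a direct specialization of Theorem~\ref{twr5} to the case $|\Omega|<\infty$. The key observation, already recorded in the text just before the statement of Theorem~\ref{twr:1.4}, is that for any open set with finite Lebesgue measure one has $b(\Omega)=d$. Thus the box-packing constant, which in the quasi-bounded setting governs the asymptotic rate, collapses to the ambient dimension $d$, and substituting $b=d$ into each of the formulas of Theorem~\ref{twr5} should produce exactly the expressions claimed here.

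First I would verify that the hypotheses of Theorem~\ref{twr5} are all satisfied. The uniform $E$-porosity is assumed. Since pairwise disjoint dyadic cubes of side $2^{-j}$ inside $\Omega$ each contribute measure $2^{-jd}$, one has $b_j(\Omega)\,2^{-jd}\le|\Omega|<\infty$, giving the upper part of~(\ref{b_j}); the lower bound $\liminf_j b_j(\Omega)\,2^{-jd}>0$ is a standard consequence of the uniform $E$-porosity of $\partial\Omega$ (a positive fraction of $\Omega$ can be tiled at every dyadic scale by cubes well-separated from the boundary). With $b=d$ the hypothesis $\delta>b/p^{*}$ in Theorem~\ref{twr5} reads $s_1-s_2-d(\tfrac 1{p_1}-\tfrac 1{p_2})>d(\tfrac 1{p_2}-\tfrac 1{p_1})_+$, and a short case analysis (treating $p_1\le p_2$ and $p_1>p_2$ separately) shows that this is equivalent to $s_1-s_2-d(\tfrac 1{p_1}-\tfrac 1{p_2})_+>0$, which is exactly the condition $\delta>0$ stated in the Corollary.

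The remaining step is to substitute $b=d$ into every formula of Theorem~\ref{twr5} and simplify. The terms of the form $\tfrac{b-d}{b}(\tfrac 1{p_1}-\tfrac 1{p_2})$ vanish, so, for instance, $\tfrac{s_1-s_2}{b}+\tfrac{b-d}{b}(\tfrac 1{p_1}-\tfrac 1{p_2})$ becomes $\tfrac{s_1-s_2}{d}$; composite expressions such as $\tfrac{b(\frac 1{p_1}-\frac 12)-d(\frac 1{p_1}-\frac 1{p_2})}{b}$ collapse to $\tfrac 1{p_2}-\tfrac 12$, and the analogous expressions appearing in the Gelfand and Kolmogorov cases collapse in the same fashion. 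The threshold conditions $\tfrac\delta b<\tfrac 1{p_2}$, $\tfrac\delta b<\tfrac 1p$, $\tfrac\delta b<\tfrac\theta{p_1'}$, $\tfrac\delta b<\tfrac 1{p_1'}$, $\tfrac\delta b<\tfrac{\theta'}{p_2}$, and the relation $\delta\lessgtr b\lambda$ appearing in part~(i)--(ii), each transform into the corresponding inequalities with $b$ replaced by $d$; the parameters $\lambda$, $\theta$, $\theta'$, $p$ are independent of $b$ and remain unchanged. Matching the cases one by one yields the five asymptotic formulas listed in (i)--(v).

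There is no genuine obstacle in the argument: the substantive content is entirely contained in Theorem~\ref{twr5}, and the present statement is obtained by specialization. The only step that requires any care beyond routine bookkeeping is verifying~(\ref{b_j}) from the assumptions of finite measure and uniform $E$-porosity; after that, the proof is an exercise in transcription and case matching.
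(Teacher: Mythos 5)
Your route is exactly the paper's (the paper offers no more argument than the remark that the formulas of Theorem \ref{main} ``simplify'' when $|\Omega|<\infty$): put $b(\Omega)=d$, check the hypotheses, and transcribe. Two of your auxiliary points deserve correction, though they do not endanger the scheme. The lower bound in (\ref{b_j}) has nothing to do with porosity: any nonempty open $\Omega$ contains a fixed cube, which for large $j$ contains $\asymp 2^{jd}$ pairwise disjoint dyadic cubes of side $2^{-j}$, so $\liminf_j b_j(\Omega)2^{-jd}>0$ automatically; porosity enters only through the wavelet isomorphism. You should also record that finite measure forces quasi-boundedness (otherwise $\Omega$ would contain infinitely many disjoint balls of a fixed radius, of infinite total measure), since Theorem \ref{main} assumes it.

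The genuine gap is in the step you dismiss as ``transcription and case matching.'' The Corollary does not keep the $\delta$ of Theorem \ref{main} but redefines $\delta=s_1-s_2-d(\tfrac1{p_1}-\tfrac1{p_2})_+$. For $p_1\le p_2$ the two quantities coincide, so parts (iii)--(v) and the corresponding cases of (i) go through verbatim; but in the Weyl cases of (i)--(ii) with $p_2<p_1$ they differ by $d(\tfrac1{p_2}-\tfrac1{p_1})>0$. Substituting $b=d$ in Theorem \ref{main} (equivalently in Theorem \ref{q0.2}) yields thresholds in terms of the \emph{old} $\delta$: for $0<p_2\le2<p_1\le\infty$ the condition $\delta/d\lessgtr\tfrac1{p_2}$ translates into $(s_1-s_2)/d\lessgtr\tfrac1{p_1}$, not into $(s_1-s_2)/d\lessgtr\tfrac1{p_2}$ as a literal reading of the Corollary's $\delta$ would give, and likewise $\delta\lessgtr d\lambda$ must be read with the $\delta$ of Theorem \ref{main}. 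This is not cosmetic: in the band $\tfrac1{p_1}<(s_1-s_2)/d<\tfrac1{p_2}$ the two readings assign different exponents ($\tfrac{p_1}{2}\tfrac{s_1-s_2}{d}$ versus the correct $\tfrac{s_1-s_2}{d}+\tfrac12-\tfrac1{p_1}$). So your claim that replacing $b$ by $d$ ``transforms each threshold into the corresponding inequality'' and that the cases then match one by one does not hold as stated; you must either carry out the change of variable from the old $\delta$ to $s_1-s_2$ explicitly in the $p_2<p_1$ cases (and then flag the resulting discrepancy with the printed conditions), or stipulate that $\delta$ in the Corollary's case distinctions retains its meaning from Theorem \ref{main}. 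Everything else in your proposal is in order.
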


\begin{rmk}
Estimates of the approximation, the Gelfand, the Kolmogorov and the Weyl numbers given in the last corollary coincide with the previous results for these numbers in the case of bounded Lipschitz domains; cf. \cite[Section 3.3.4]{ET}, \cite{Cae}, \cite[Theorem 3.5, 4.12 and 4.6]{JV}. To the best of our knowledge the estimates of the Weyl numbers were not complete to the very end; cf.  A.~Caetano \cite{Cae_w1,Cae_w2}. The above corollary complements  the previous estimates.
\end{rmk}

\section*{Acknowledgments}

The first author was partially supported by the Natural Science Foundation of China (Grant Nos
11171137, 61173187 and 61173188), the 211 Project of Anhui University (Grant No. 33050069)
and Doctoral Research Start-up Fund Project of Anhui University (Grant No. 33190215).

\bigskip

\bigskip

\noindent Shun Zhang

\noindent Key Laboratory of Intelligent Computing $\&$ Signal Processing of Ministry of Education, 
\noindent and School of Computer Science and Technology,

\noindent  Anhui University,

\noindent  Hefei, 230601 Anhui,

\noindent  P.R. CHINA

\noindent  shzhang27@163.com

\medskip

\noindent Alicja G\k{a}siorowska

\noindent Institute of Mathematics,

\noindent A. Mickiewicz University,

\noindent Umultowska 87, 61-614 Pozna\'n,

\noindent POLAND

\noindent  alig@amu.edu.pl
\end{document}